\DeclareMathAlphabet{\mathpzc}{OT1}{pzc}{m}{it}
\newcommand{\R}{\mathbb{R}}
\newcommand{\N}{\mathbb{N}}
\newcommand{\C}{\mathbb{C}}
\newcommand{\Fcal}{\mathcal{F}}
\newcommand{\Kcal}{\mathcal{K}}
\newcommand{\Ycal}{\mathcal{Y}}
\newcommand\xqed[1]{\leavevmode\unskip\penalty9999 \hbox{}\nobreak\hfill \quad\hbox{#1}}
\newcommand{\exampleSymbol}{\xqed{$\triangle$}}
\newcommand{\qedOwn}{\xqed{$\square$}}
\newtheorem{theorem}{Theorem}[section]
\newtheorem{remark}[theorem]{Remark}
\newtheorem{example}[theorem]{Example}
\newtheorem{assumption}[theorem]{Assumption}
\renewcommand*\env@matrix[1][*\c@MaxMatrixCols c]{%
  \hskip -\arraycolsep
  \let\@ifnextchar\new@ifnextchar
  \array{#1}}
\newcolumntype{Y}{>{\centering\arraybackslash}X}
\let\OLDthebibliography\thebibliography
\renewcommand\thebibliography[1]{
	\OLDthebibliography{#1}
	\setlength{\parskip}{0pt}
	\setlength{\itemsep}{0pt plus 0.3ex}
}
\date{}
\begin{document}

\title{Koopman operator-based model reduction for switched-system control of PDEs}
\author[1]{Sebastian Peitz}
\author[2]{Stefan Klus}
\affil[1]{\normalsize Department of Mathematics, Paderborn University, Germany}
\affil[2]{\normalsize Department of Mathematics and Computer Science, Freie Universit{\"a}t Berlin, Germany}

\maketitle

\begin{abstract}
We present a new framework for optimal and feedback control of PDEs using Koopman operator-based reduced order models (K-ROMs). The Koopman operator is a linear but infinite-dimensional operator which describes the dynamics of observables. A numerical approximation of the Koopman operator therefore yields a linear system for the observation of an autonomous dynamical system. In our approach, by introducing a finite number of constant controls, the dynamic control system is transformed into a set of autonomous systems and the corresponding optimal control problem into a switching time optimization problem. This allows us to replace each of these systems by a K-ROM which can be solved orders of magnitude faster. By this approach, a nonlinear infinite-dimensional control problem is transformed into a low-dimensional linear problem. In situations where the Koopman operator can be computed exactly using Extended Dynamic Mode Decomposition (EDMD), the proposed approach yields optimal control inputs. Furthermore, a recent convergence result for EDMD suggests that the approach can be applied to more complex dynamics as well. To illustrate the results, we consider the 1D Burgers equation and the 2D Navier--Stokes equations. The numerical experiments show remarkable performance concerning both solution times and accuracy.
\end{abstract}

\section{Introduction}
\label{sec:Introduction}

The increasing complexity of technical systems presents a great challenge for control. We often want to control system dynamics described by partial differential equations (PDEs). If the system is nonlinear -- such as the Navier--Stokes equations for fluid flow --, this is particularly challenging. As a result, advanced control techniques such as \emph{Model Predictive Control (MPC)} \cite{GP17} have gained more and more attention in recent years. In MPC, a system model is used to repeatedly compute an open-loop optimal control on a finite-time horizon which results in a closed-loop control behavior. This requires solving the open-loop problem in a very short time, which is in general infeasible for nonlinear PDEs when using a standard discretization approach such as finite element or finite volume methods.

To overcome this problem, reduced order modeling is frequently applied to replace the high fidelity model by a surrogate model that can be solved much faster, see~\cite{BGW15} for an overview. Various methods exist for deriving such a surrogate model, the most common for nonlinear systems probably being \emph{Galerkin projection} in combination with \emph{Proper Orthogonal Decomposition (POD)} \cite{Sir87}. Many researchers have dedicated their work to developing optimal control methods based on POD for which convergence towards the true optimum can be proved, either using the singular values associated with the POD modes \cite{Row05,HV05,TV09} or by \emph{trust-region} approaches \cite{Fah00}. An approach to closed-loop flow control using POD-based surrogate models has been developed in \cite{PHA15}. A well-known drawback is that the number of required POD modes grows rapidly with increasing complexity of the system dynamics so that Galerkin models can become infeasible. Often additional measures like calibration \cite{CEMF09} or the construction of special modes \cite{NPM05} have to be taken. 

An alternative approach to construct a \emph{reduced order model (ROM)} is by means of the \emph{Koopman operator}~\cite{Koo31}, which is a linear but infinite-dimensional operator describing the dynamics of observables. This approach is particularly suited to be applied to sensor measurements, also in situations where the underlying system dynamics is unknown. A lot of work has been invested both to study the properties of the Koopman operator \cite{BMM12,Mez13} as well as to efficiently compute numerical approximations via \emph{Dynamic Mode Decomposition (DMD)} \cite{Sch10,RMB+09,TRL+14} or \emph{Extended Dynamic Mode Decomposition (EDMD)} \cite{WKR15,KKS16,KGPS18}. More recently, various attempts have been made to use ROMs based on the Koopman operator for control problems \cite{PBK18,KM16,KKB17}. In these approaches, the Koopman operator is approximated from an augmented state -- consisting of the actual state and the control -- in order to deal with the non-autonomous control system. Moreover, in most cases the entire state is used as the observable (\emph{full-state observable}) in order to reproduce the system dynamics.

In this article, we propose an alternative approach for solving both open- and closed-loop control problems using \emph{Koopman operator-based ROMs (K-ROMs)}, which, one the one hand, allows us to drastically reduce the dimension of the model and, on the other hand, gives access to present (and future) convergence results for numerical approximations of the Koopman operator. The key idea is to transform the dynamical control system (i.e., a nonlinear ODE or PDE) into a switched dynamical system by restricting the control to a finite set of constant values, where the state can be influenced by switching between different autonomous systems at each time step. By numerically approximating a Koopman operator for each of the different autonomous dynamical systems individually, the switched dynamics can be reproduced by switching between the respective Koopman operators. This results in a switching time problem for the open-loop case, whereas the closed-loop approach is realized via MPC. Using recent convergence results for the Koopman operator \cite{KM17}, identity of the full and the K-ROM based objective function is achieved. Furthermore, we make use of the fact that in a control problem, we do not necessarily require knowledge of the entire system state but only of some observation which is used in the controller. This allows us to drastically reduce the dimension of the optimization problem -- instead of using a high-dimensional finite-element discretization and a higher order time integrator, we can use a matrix-vector product to predict the dynamics of the low-dimensional observation. 
For related concepts, the reader is referred to\cite{VMS10} and \cite{GL99,LHPT08,CDH16} for approaches based on the Perron--Frobenius operator and \emph{occupation measures}, respectively.

The remainder of the article is structured as follows: In Section~\ref{sec:Theory}, we introduce the basic concepts for the Koopman operator and its numerical approximation as well as the relevant control techniques. Our new approach is then introduced in Section~\ref{sec:Control_Koopman} and results are shown in Section \ref{sec:Results} for an ODE problem, for the 1D Burgers equation, and for the 2D Navier--Stokes equations. We conclude with a short summary and possible future work in Section~\ref{sec:Conclusion}.

\section{Preliminaries}
\label{sec:Theory}

The goal of this article is to significantly accelerate the solution of a PDE-constrained optimal control problem of the form:
\begin{equation}\label{eq:OCP}
	\begin{aligned}
		\min_{u \in \mathcal{U}} J(y) &= \min_{u \in \mathcal{U}} \int_{t_0}^{t_e}L(y(\cdot,t)) \ dt \\
		\mbox{s.t.} \quad \dot{y}(\cdot,t) &= G(y(\cdot,t),u(t)), \\
		y(\cdot,0) &= y^0,
	\end{aligned}
\end{equation}
where $y$ is the system state and $y(\cdot, t)$ is an element of an appropriate function space $\Ycal$ (e.g., the Sobolev space $H^s(\Omega, \R^{n_y})$ with $\Omega$ being the domain, $n_y$ the spatial dimension and $s \geq 1$ the required differentiability). Furthermore, $u \in L^2([t_0,t_e], U^{n_u})$ is the $n_u$-dimensional control with box constraints $U = [u^l, u^u]$, and $G \colon \Ycal \times U \rightarrow \Ycal$ describes the system dynamics. For ease of notation, the objective function $L \colon \Ycal\rightarrow\R$ only depends on $y$ explicitly. However, the framework presented here can be extended to objectives also depending on the control $u$ in a straightforward manner. In order to achieve the desired acceleration of \eqref{eq:OCP}, we are taking two steps:
\begin{enumerate}[i)]
	\item replace $G$ by a finite number of autonomous systems
		\[G_{\overline{u}}(y(\cdot,t)) = G(y(\cdot,t),\overline{u})\]
		with constant input $\overline{u} \in \hat{U} = \{u^0,\ldots,u^{n_c-1}\}$;
	\item construct linear systems for low-dimensional observations of the infinite-dimensional systems $G_{u^j}$ using the Koopman operator.
\end{enumerate}

In the following Subsections~\ref{subsubsec:STMPC_STO} and~\ref{subsubsec:STMPC_MPC}, \emph{Switching Time Optimization (STO)} and \emph{Model Predictive Control (MPC)} are introduced. We will rely on STO for open-loop and on MPC for closed-loop control of the switching problem introduced by step i).
Then, in Section~\ref{subsec:Koopman}, the Koopman operator and its numerical approximation via EDMD are introduced which we will utilize for constructing the reduced system introduced in step ii). Convergence of EDMD towards the Koopman operator is discussed since we will rely on this result for our convergence statement. 

\subsection{Switching Time Optimization and MPC}
\label{subsec:STMPC}

In this section we introduce the concepts of STO and MPC which we will later use in open- and closed-loop control algorithms.

\subsubsection{Switching Time Optimization}
\label{subsubsec:STMPC_STO}
Switched systems are very common in engineering. They can be seen as a special case of hybrid systems which possess both continuous and discrete-time control inputs (cf.~\cite{Lib03,ZA15} for an introduction and a survey). The switched systems we want to consider here are characterized by $n_c$ different (autonomous) right-hand sides $G_{u^0}, \ldots, G_{u^{n_c-1}}$. We introduce the switching sequence $\tau \in \R^{p+2}$ with $\tau_0 = t_0$ and $\tau_{p+1} = t_e$. The entries $\tau_1,\ldots,\tau_p$ (with $\tau_l \geq \tau_{l-1}$) describe the time instants at which the right-hand side of the dynamical system is changed:
\begin{equation}\label{eq:switched_dynamics}
	\begin{aligned}
		\dot{y}(\cdot,t) &= G_{u^j}(y(\cdot,t)) \quad \text{for}~t\in [\tau_{l-1}, \tau_l), \\
		y(\cdot,0) &= y^0, \\
		j &= l~\mbox{mod}~n_c.
	\end{aligned}
\end{equation}
The third line in \eqref{eq:switched_dynamics} indicates that (following \cite{EWD03}) the switching sequence of the system is predetermined, i.e., we switch from $G_{u^0}$ to $G_{u^1}$, from $G_{u^1}$ to $G_{u^2}$ and so on. 
Finally, we go back from $G_{u^{n_c-1}}$ to $G_{u^0}$.

Problem \eqref{eq:OCP} can be transformed into a switched system by restricting $u$ to $\hat{U} = \{u^0,\ldots,u^{n_c-1}\} \subset U$, i.e., $G_{u^j}(y(\cdot,t)) = G(y(\cdot,t),u^j)$. For simplicity, we always place the $u^j$ equidistantly between $u^0 = u^l$ and $u^{n_c-1} = u^u$. However, it may be advantageous to determine problem-specific values for the $u^j$ in the offline phase.
Using this transformation, problem \eqref{eq:OCP} can be written in terms of the switching instants:
\begin{equation}\label{eq:STO}
	\begin{aligned}
		\min_{\tau \in \R^{p+2}} J(y) &= \min_{\tau \in \R^{p+2}} \int_{t_0}^{t_e}L(y(\cdot,t)) \ dt\\
		\mbox{s.t.} \quad &\eqref{eq:switched_dynamics}.
	\end{aligned}
\end{equation}
Switched systems appear in many applications. Consider, for instance, a chemical reactor where a valve is either open or closed. Moreover, continuous control inputs may be approximated by a finite number of fixed control inputs as we do here. Motivated by this, switching time problems have been extensively studied in the literature, see, e.g., \cite{Lib03}. Consequently, there exist efficient methods to determine the optimal switching sequence using gradient-based and even second-order, Newton-type methods which rely on a reformulation in terms of the switching times \cite{EWD03,SOBG17}. Alternative methods are based on relaxation and penalization or sum-up rounding \cite{Sag09,SBD12}.

\begin{example}\label{ex:ODE_PBK16}
	Consider the following ODE example \cite{PBK18}:
	\begin{equation*}
	\begin{aligned}
	\dot{y}(t) &= G(y(t),u(t)) = \left(\begin{array}{c}
	\alpha \, y_1(t) \\ \beta (y_2(t) - (y_1(t))^2) + u(t)
	\end{array}\right),\\
	y(0) &= y^0.
	\end{aligned}
	\end{equation*}
	By restricting $u$ to $n_c$ constant values, we transform the control system into $n_c$ autonomous systems:
	\begin{equation}\label{eq:BBPK16}
	\begin{aligned}
	\dot{y}(t) &= G_{u^j}(y(t)) = \left(\begin{array}{c}
	\alpha \, y_1(t) \\ \beta (y_2(t) - (y_1(t))^2)
	\end{array}\right) + \left(\begin{array}{c}
	0 \\ u^j
	\end{array}\right),\\
	y(0) &= y^0.
	\end{aligned}
	\end{equation}
	The system dynamics of \eqref{eq:BBPK16} (with $\alpha = -0.05$ and $\beta = -1$) are visualized in Figure~\ref{fig:SwitchingTime_BBPK16} for $n_c = 3$ and a fixed switching sequence with 10 intervals. \exampleSymbol
	\begin{figure}[h]
		\centering
		\parbox[b]{0.49\textwidth}{\centering \includegraphics[width=.4\textwidth]{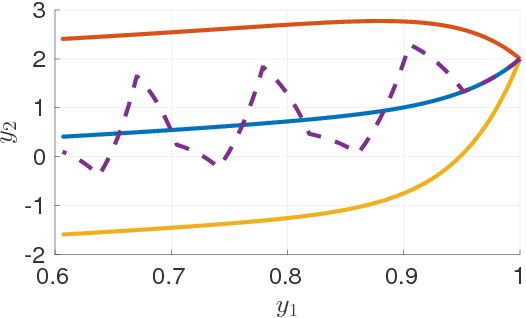} \\ \footnotesize{(a)}}
		\parbox[b]{0.49\textwidth}{\centering \includegraphics[width=.4\textwidth]{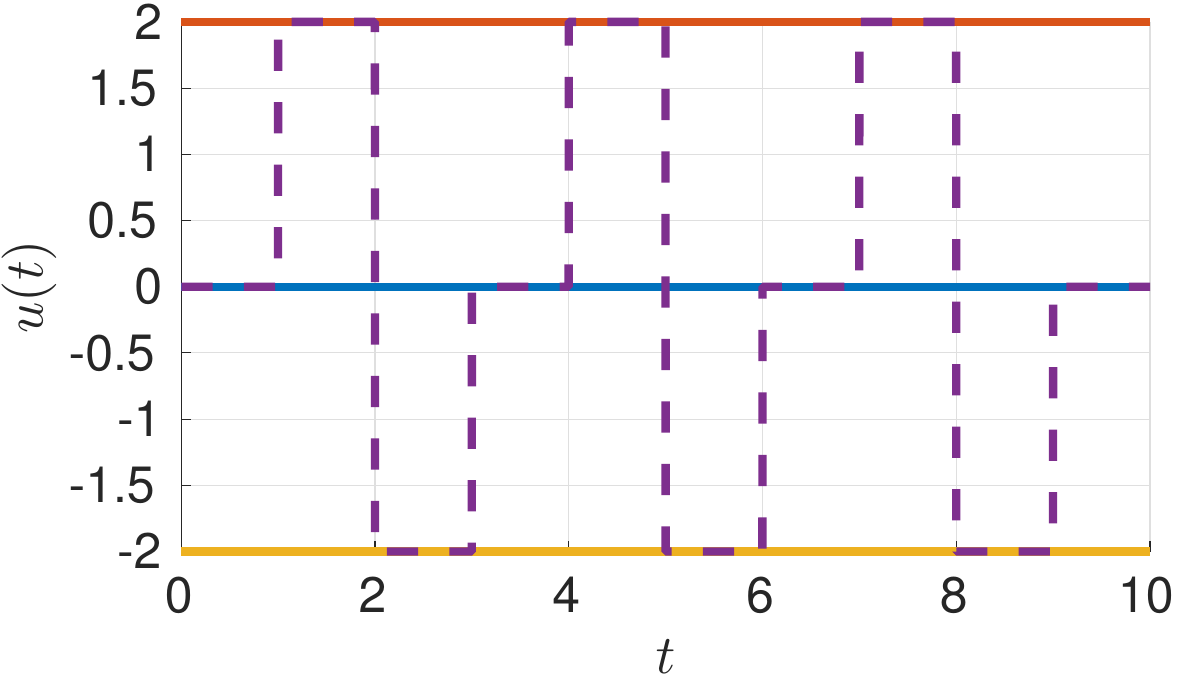} \\ \footnotesize{(b)}}
		\caption{(a) The trajectories of three different autonomous systems ($u^0 = 0$, $u^1 = 2$, $u^2 = -2$) with starting point $ y^0 = (1, 2)^\top $ and the trajectory of the switched system (dashed line) with switching according to (b).}
		\label{fig:SwitchingTime_BBPK16}
	\end{figure}
\end{example}

\subsubsection{Model Predictive Control}
\label{subsubsec:STMPC_MPC}
For real systems, it is often insufficient to determine a control input a priori. Due to the so-called \emph{plant-model mismatch} -- the difference between the dynamics of the real system and the model -- the open-loop control input will not be able to control the system as desired or at least be non-optimal. Furthermore, disturbances cannot be taken into account by open-loop control strategies. A remedy to this issue is MPC \cite{GP17} (cf.~also \cite{IK02} for the infinite-dimensional case), where open-loop problems are solved repeatedly on finite horizons. 
In order to use the discrete-time formulation from \cite{GP17}, we introduce the flow map $\Phi : \Ycal \times U \rightarrow \Ycal$ of the continuous system $G$ with a fixed time step $h$ (obtained by a numerical discretization scheme, for instance), and the control $u$ is constant over each interval $h$. For this system, an open-loop problem is solved in real-time over a so-called \emph{prediction horizon} of length $p$:
\begin{equation}\label{eq:MPC}
	\begin{aligned}
		\min_{u \in U^p} &\sum_{i=s}^{s+p-1} L(y_i) \\
		\text{s.t.}\quad y_{i+1} &= \Phi(y_i, u_{i-s+1}), \\
		y_0 &= y^0,
	\end{aligned}
\end{equation}
where we have introduced the abbreviation $y_i = y(\cdot,t_i)$. The first part of this solution is then applied to the real system while the optimization is repeated with the prediction horizon moving forward by one sample time. (The indexing $i-s+1$ is required to account for the finite-horizon control and the infinite-horizon state.) For this reason, MPC is also referred to as \emph{moving horizon control} or \emph{receding horizon control}.


Similar to the transformation of the STO problem, we now replace the dynamical control system by $n_c$ autonomous systems $\Phi_{u^0}$ to $\Phi_{u^{n_c-1}}$ and thereby transform Problem \eqref{eq:MPC} to a switching problem:
\begin{equation}\label{eq:MPC_STO}
	\begin{aligned} 
		\min_{u \in \hat{U}^p} &\sum_{i=s}^{s+p-1} L(y_i) \\
		\mbox{s.t.}\quad y_{i+1} &= \Phi_{u_{i-s+1}}(y_i), \\
		y_0 &= y^0.
	\end{aligned}
\end{equation}
In other words, each entry of $u$ describes which system $\Phi_{u_i}$ to apply in the $i^{\mathsf{th}}$ step. Due to the discrete-time dynamics, Problem~\eqref{eq:MPC_STO} is now a combinatorial problem that can be solved using dynamic programming, for instance. 

\subsection{Koopman Operator and EDMD}
\label{subsec:Koopman}

Let $ \Phi \colon \Ycal \to \Ycal $ be a discrete deterministic dynamical system defined on the state space $\Ycal$ and let $ f \colon \Ycal \rightarrow \R^q $ be a real-valued observable of the system. Then the Koopman operator $ \Kcal \colon \Fcal \to \Fcal $ with $\Fcal = L^{2}(\Ycal)$ (see~\cite{Mez13,WKR15}), which describes the evolution of the observable $ f $, is defined by
\begin{equation*}
	(\Kcal f)(y) = f(\Phi(y)).
\end{equation*}
The Koopman operator is linear but infinite-dimensional. Its adjoint, the Perron--Frobenius operator, describes the evolution of densities. The definition of the Koopman operator can be naturally extended to continuous-time dynamical systems as described in \cite{LaMa94,BMM12}. Given an autonomous system of the form 
\begin{equation*}
	\dot{y}(\cdot,t) = G(y(\cdot,t)),
\end{equation*}
the \emph{Koopman semigroup} of operators $ \{ \Kcal^t \} $ is defined as
\begin{equation*}
	(\Kcal^t f)(y) = f(\Phi^t(y)),
\end{equation*}
where $ \Phi^t $ is the flow map associated with $ G $. In what follows, we will consider discrete dynamical systems given by the discretization of ODEs or PDEs. That is, $ \Phi = \Phi^h $ for a fixed time step $ h $.

One method to compute a numerical approximation of the Koopman operator from data is EDMD \cite{WKR15,KKS16}. The following brief description is based on the review paper \cite{KNKWKSN18}. EDMD is a generalization of DMD \cite{Sch10,TRL+14} and can be used to compute a finite-dimensional approximation of the Koopman operator, its eigenvalues, eigenfunctions, and modes. In contrast to DMD, EDMD allows arbitrary basis functions -- which could be, for instance, monomials, Hermite polynomials, or trigonometric functions -- for the approximation of the dynamics. We will sometimes not be able to observe the full (potentially infinite-dimensional) state of the system, but consider only a finite number of measurements, given by $ z = f(y) \in \R^q $. For a given set of basis functions $ \{ \psi_{1},\,\psi_{2},\,\dots,\,\psi_{k} \} $, we then define a vector-valued function $ \psi \colon \R^q \to \R^k $ by
\begin{equation*}
	\psi(z) =
	\begin{bmatrix}
	\psi_{1}(z) & \psi_{2}(z) & \dots & \psi_{k}(z)
	\end{bmatrix}^{\top}.
\end{equation*}
If $ \psi(z) = z $, we obtain DMD as a special case of EDMD. We assume that we have either measurement or simulation data, written in matrix form as
\begin{equation*}
	Z =
	\begin{bmatrix}
	z_1 & z_2 & \cdots & z_m
	\end{bmatrix}
	\quad\text{and}\quad
	\widetilde{Z} =
	\begin{bmatrix}
	\widetilde{z}_1 & \widetilde{z}_2 & \cdots & \widetilde{z}_m
	\end{bmatrix},
\end{equation*}
where $ \widetilde{z}_i = f(\Phi(y_i)) $. The data could either be obtained via many short simulations or experiments with different initial conditions or one long-term trajectory or measurement. If the data is extracted from one long trajectory, then $ \widetilde{z}_i = z_{i+1} $. The data matrices are embedded into the typically higher-dimensional feature space by
\begin{align*}
	\Psi_{Z} = \begin{bmatrix} \psi(z_{1}) & \dots & \psi(z_{m}) \end{bmatrix} \mbox{ and } \Psi_{\widetilde{Z}} =
	\begin{bmatrix} \psi(\widetilde{z}_{1}) & \dots & \psi(\widetilde{z}_{m}) \end{bmatrix}.
\end{align*}
With these data matrices, we then compute the matrix $ K \in \R^{k \times k} $ defined by
\begin{equation*}
K^{\top} = \Psi_{\widetilde{Z}} \Psi_Z^+ = \big( \Psi_{\widetilde{Z}} \Psi_Z^{\top} \big) \big(\Psi_Z \Psi_Z^{\top}\big)^+,
\end{equation*}
where $^+$ denotes the pseudoinverse. 
The matrix $ K $ can be viewed as a finite-dimensional approximation of the Koopman operator. 
\begin{remark}
	The decomposition of the Koopman operator into modes, eigenvalues and eigenfunctions is commonly used to analyze the system dynamics as well as predict the future state. In the situation we are presenting here, we can pursue an even simpler approach and obtain the discrete-time update for the observable $z$ directly using $K$:
	\begin{equation*}
		\psi(z_{i+1}) \approx K^{\top} \psi(z_{i}), \quad i = 0,1,\ldots
	\end{equation*}
\end{remark}
First results showing convergence of EDMD towards the Koopman operator have recently been proven in~\cite{KM17}. In short, the result states that as both the basis size $k$ as well as the number of measurements $m$ tend to infinity, the matrix $K^{\top} (=\Kcal_{k,m})$ converges to the Koopman operator. As we will utilize this result for our convergence analysis, it is repeated below.
Before stating the two theorems required for the convergence, we introduce the following two assumptions.
\begin{assumption}\label{ass:muIndependence}
	The basis functions $\psi_{1}, \ldots ,\psi_k$ are such that
	\begin{align*}
	\mu \{z \in \mathcal{Z} \mid c^{\top}\psi(z) = 0 \} = 0
	\end{align*}
	for all $c \in \R^k$, $c \neq 0$, where $\mu$ is a given probability distribution according to which the data samples $z_1,\ldots,z_m$ are drawn and $ \mathcal{Z} \subset \R^q $ is the space of all measurements.
\end{assumption}
\noindent This assumption ensures that the measure $\mu$ is not supported on a zero level set of a linear combination of the basis functions used, cf.~\cite{KM17} for details.
\begin{assumption} \label{ass:KoopmanBoundedness}
	The following conditions hold:
	\begin{enumerate}
		\item The Koopman operator $\Kcal \colon \Fcal \to \Fcal$ is bounded.
		\item The observables $\psi_{1}, \ldots ,\psi_k$ defining $\Fcal_k$ (i.e., the finite-dimensional representation of $\Fcal$) are selected from a given orthonormal basis of $\Fcal$, i.e., $(\psi_i)_{i=1}^{\infty}$ is an orthonormal basis of $\Fcal$.
	\end{enumerate}
\end{assumption}

The convergence of $\Kcal_{k,m}$ to $\Kcal$ is now achieved in two steps. In the first step, convergence of $\Kcal_{k,m}$ to $\Kcal_{k}$ is shown as the number of samples $m$ tends to infinity. Here, $\Kcal_k$ is the projection of $\Kcal$ onto $\Fcal_k$. The second step then yields convergence of $\Kcal_{k}$ to $\Kcal$ as the basis size $k$ increases.
\begin{theorem}[\cite{KM17}] \label{thm:Convergence_Koopman1}
	If Assumption~\ref{ass:muIndependence} holds, then we have with probability one for all $\phi \in \Fcal_k$ 
	\begin{equation*}
	\lim_{m\rightarrow\infty} \|\Kcal_{k,m} \phi - \Kcal_k \phi \| = 0,
	\end{equation*}
	where $\| \cdot \|$ is any norm on $\Fcal_k$. In particular, we obtain
	\begin{equation*}
	\lim_{m\rightarrow\infty} \|\Kcal_{k,m} - \Kcal_k \| = 0,
	\end{equation*}
	where $\| \cdot \|$ is any operator norm and 
	\begin{equation*}
	\lim_{m\rightarrow\infty} \mbox{dist}(\sigma(\Kcal_{k,m}), \sigma(\Kcal_{k})) = 0,
	\end{equation*}
	where $\sigma(\cdot) \subset \C$ denotes the spectrum of an operator and $\mbox{dist}(\cdot, \cdot)$ the Hausdorff metric on subsets of $\C$.
\end{theorem}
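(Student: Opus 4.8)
The plan is to recognize the EDMD matrix $\Kcal_{k,m}$ as a ratio of empirical second moments of the sampling distribution and to pass to the limit $m\to\infty$ using the strong law of large numbers (Birkhoff's ergodic theorem in the single-trajectory variant). Recall that $\Kcal_{k,m}=K^{\top}=\big(\Psi_{\widetilde{Z}}\Psi_Z^{\top}\big)\big(\Psi_Z\Psi_Z^{\top}\big)^{+}$; dividing both factors by $m$, the entries of $\tfrac1m\Psi_Z\Psi_Z^{\top}$ and $\tfrac1m\Psi_{\widetilde{Z}}\Psi_Z^{\top}$ are the sample averages of $\psi_i(z_\ell)\,\psi_j(z_\ell)$ and $\psi_i(\widetilde z_\ell)\,\psi_j(z_\ell)=(\Kcal\psi_i)(z_\ell)\,\psi_j(z_\ell)$ over the samples $z_\ell=f(y_\ell)$ drawn according to $\mu$. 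First I would apply the strong law of large numbers to each of the finitely many entries, obtaining almost surely, with a single null set,
\begin{equation*}
  \tfrac1m\Psi_Z\Psi_Z^{\top}\longrightarrow G,\qquad
  \tfrac1m\Psi_{\widetilde{Z}}\Psi_Z^{\top}\longrightarrow C,\qquad m\to\infty,
\end{equation*}
where $G_{ij}=\langle\psi_i,\psi_j\rangle_{L^2(\mu)}$ and $C_{ij}=\langle\Kcal\psi_i,\psi_j\rangle_{L^2(\mu)}$; the integrability required here is part of the standing EDMD setup, being exactly what makes $G$ finite and the compression $\Kcal_k$ well defined.

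Second, I would identify the limit. Assumption~\ref{ass:muIndependence} states that no nontrivial linear combination $c^{\top}\psi$ vanishes $\mu$-almost everywhere, hence $c^{\top}Gc=\|c^{\top}\psi\|_{L^2(\mu)}^{2}>0$ for every $c\neq0$; thus $G$ is positive definite, in particular invertible, and $G^{+}=G^{-1}$. The deterministic matrix obtained by replacing the data matrices in the EDMD formula by their population counterparts is therefore $C\,G^{-1}$, and a short computation with the normal equations shows that $C\,G^{-1}$ is precisely the matrix, in the coordinates supplied by $\{\psi_1,\dots,\psi_k\}$, of the $L^2(\mu)$-orthogonal compression $\Kcal_k=P_k\Kcal|_{\Fcal_k}$ — this is the standard Galerkin reading of EDMD. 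It then remains to show $\Kcal_{k,m}\to C\,G^{-1}$.

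The main obstacle, and the only genuinely delicate point, is the Moore--Penrose pseudoinverse: for small $m$ the empirical Gram matrix $\tfrac1m\Psi_Z\Psi_Z^{\top}$ can be rank deficient, and $X\mapsto X^{+}$ is \emph{not} continuous at rank-deficient matrices, so the limit cannot simply be pushed through $(\cdot)^{+}$. I would resolve this by exploiting that the limit $G$ is invertible: on the probability-one event above, $\tfrac1m\Psi_Z\Psi_Z^{\top}$ eventually lies in a neighborhood of $G$ consisting of invertible matrices, so it is itself invertible for all sufficiently large $m$ (depending on the realization); for those $m$ the pseudoinverse equals the ordinary inverse, and inversion \emph{is} continuous on the open set of invertible matrices. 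Consequently, almost surely,
\begin{equation*}
  \Kcal_{k,m}=\Big(\tfrac1m\Psi_{\widetilde{Z}}\Psi_Z^{\top}\Big)\Big(\tfrac1m\Psi_Z\Psi_Z^{\top}\Big)^{-1}\longrightarrow C\,G^{-1}=\Kcal_k,\qquad m\to\infty,
\end{equation*}
which gives the entrywise (and hence, in one stroke, the first-displayed) convergence.

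Finally, the remaining two assertions are soft finite-dimensional consequences. Since $\Fcal_k$ is finite-dimensional, all norms on $\Fcal_k$, and all operator norms on $\mathrm{End}(\Fcal_k)\cong\R^{k\times k}$, are equivalent; the entrywise convergence therefore upgrades to $\|\Kcal_{k,m}\phi-\Kcal_k\phi\|\to0$ for each fixed $\phi\in\Fcal_k$ and to $\|\Kcal_{k,m}-\Kcal_k\|\to0$, in arbitrary such norms. For the spectra, I would invoke the classical fact that the eigenvalues of a matrix depend continuously on its entries — equivalently, that $\sigma(\cdot)$ is continuous with respect to the Hausdorff metric on compact subsets of $\C$ — and apply it along $\Kcal_{k,m}\to\Kcal_k$ to conclude $\mbox{dist}\big(\sigma(\Kcal_{k,m}),\sigma(\Kcal_k)\big)\to0$. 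Everything takes place on the one almost-sure event produced at the start, which is the source of the ``with probability one'' in the statement.
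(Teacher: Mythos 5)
The paper does not prove this theorem; it is quoted verbatim from the cited reference \cite{KM17} as an imported result. Your reconstruction is correct and follows essentially the same route as the proof in that reference: entrywise strong law of large numbers for the empirical moment matrices, positive definiteness of the Gram matrix $G$ from Assumption~\ref{ass:muIndependence} (which is exactly what neutralizes the discontinuity of the pseudoinverse, the one delicate step you rightly single out), identification of $CG^{-1}$ as the matrix of the $L_2(\mu)$-Galerkin compression $\Kcal_k$, and the norm and spectral statements by finite-dimensional norm equivalence and continuity of eigenvalues.
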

\begin{theorem}[\cite{KM17}] \label{thm:Convergence_Koopman}
	Let Assumption~\ref{ass:KoopmanBoundedness} hold and define the $L_2(\mu)$ projection of a function $\phi$ onto $\Fcal_k$ by
	\begin{equation*}
	P_k^{\mu} \phi = \arg \min_{f\in \Fcal_k} \| f - \phi \|_{L_2(\mu)}.
	\end{equation*}
	Then, as $k \rightarrow \infty$, the sequence of operators $\Kcal_k P_k^{\mu} = P_k^{\mu} \Kcal P_k^{\mu}$ converges strongly to $\Kcal$ in $L^2(\mu)$. 
\end{theorem}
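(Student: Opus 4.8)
The plan is to reduce the assertion to two elementary facts about orthogonal projections onto the nested subspaces $\Fcal_1 \subset \Fcal_2 \subset \cdots$. By part~2 of Assumption~\ref{ass:KoopmanBoundedness}, $(\psi_i)_{i=1}^{\infty}$ is an orthonormal basis of the Hilbert space $\Fcal$ --- here $\Fcal$ is the Hilbert space $L_2(\mu)$ with $\langle f, g \rangle = \int f\,\overline{g}\,d\mu$ that is implicitly used in Assumption~\ref{ass:KoopmanBoundedness} --- so the $L_2(\mu)$-best-approximation operator $P^k_\mu$ is precisely the \emph{orthogonal} projection onto $\Fcal_k = \operatorname{span}\{\psi_1,\dots,\psi_k\}$; in particular $(P^k_\mu)^2 = P^k_\mu$ and $\|P^k_\mu\| = 1$ for every $k$. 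First I would record the operator identity $\Kcal_k P^k_\mu = P^k_\mu \Kcal P^k_\mu$: since $\Kcal_k$ is by definition the compression $P^k_\mu \Kcal|_{\Fcal_k}$ of $\Kcal$ to $\Fcal_k$, and $P^k_\mu$ has range $\Fcal_k$ and is idempotent, applying $\Kcal_k$ to $P^k_\mu f$ gives $P^k_\mu \Kcal P^k_\mu f$. This disposes of the equality asserted in the theorem and reduces everything to showing $P^k_\mu \Kcal P^k_\mu \to \Kcal$ in the strong operator topology.

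The second ingredient is that $P^k_\mu \to I$ strongly on $\Fcal$ as $k \to \infty$, which is just Parseval's identity: for any $g \in \Fcal$ one has $\|g - P^k_\mu g\|^2 = \sum_{i > k} |\langle g, \psi_i\rangle|^2 \to 0$, the tail of a convergent series. Granting this, I would fix $f \in \Fcal$ and split
\begin{equation*}
  P^k_\mu \Kcal P^k_\mu f - \Kcal f = P^k_\mu \Kcal \big( P^k_\mu f - f \big) + \big( P^k_\mu \Kcal f - \Kcal f \big).
\end{equation*}
The first term is bounded by $\|P^k_\mu\|\,\|\Kcal\|\,\|P^k_\mu f - f\| \le \|\Kcal\|\,\|P^k_\mu f - f\| \to 0$, using $\|P^k_\mu\| \le 1$, the boundedness of $\Kcal$ (part~1 of Assumption~\ref{ass:KoopmanBoundedness}), and the strong convergence $P^k_\mu \to I$ applied to $f$; the second term tends to $0$ because it is the strong convergence $P^k_\mu \to I$ applied to the vector $\Kcal f \in \Fcal$. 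Adding the two estimates yields $\|P^k_\mu \Kcal P^k_\mu f - \Kcal f\| \to 0$, which is the claim. Via the identity above this is just strong convergence of the Galerkin compressions $\Kcal_k$ of a bounded operator, a classical fact.

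The argument is short, so the only genuine subtlety --- and the place where Assumption~\ref{ass:KoopmanBoundedness} is indispensable --- sits in the first term: controlling $\Kcal(P^k_\mu f - f)$ requires $\Kcal$ to be bounded, since $P^k_\mu f - f \to 0$ in norm does not force $\Kcal(P^k_\mu f - f) \to 0$ for an unbounded operator, so boundedness is hypothesized rather than incidental. Part~2 of the assumption is what makes $P^k_\mu$ an honest orthogonal projection of norm one (rather than a possibly ill-conditioned oblique projection), which is needed both for the bound $\|P^k_\mu\| \le 1$ and for the clean Parseval-tail estimate. I would also note that Assumption~\ref{ass:muIndependence} plays no role in this second step --- it enters only in Theorem~\ref{thm:Convergence_Koopman1} --- and that the strong convergence here cannot in general be strengthened to norm convergence unless $\Kcal$ is additionally compact.
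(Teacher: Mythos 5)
Your argument is correct. Note, though, that the paper itself gives no proof of this statement --- it is imported verbatim from \cite{KM17} --- so there is no in-paper argument to compare against; your reconstruction (orthogonal projections onto nested spans, Parseval giving $P^k_\mu \to I$ strongly, then the telescoping split $P^k_\mu \Kcal P^k_\mu f - \Kcal f = P^k_\mu \Kcal (P^k_\mu f - f) + (P^k_\mu \Kcal f - \Kcal f)$ controlled by $\|P^k_\mu\|\le 1$ and $\|\Kcal\|<\infty$) is exactly the standard proof used in the cited reference, and your observation that the statement silently requires reading $\Fcal$ as $L_2(\mu)$ rather than the $L^\infty(\mathcal{M})$ declared earlier in the paper is an accurate and worthwhile catch.
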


\section{Open- and closed-loop control using K-ROMs}
\label{sec:Control_Koopman}

If the system dynamics $G$ are known, then the techniques from Section~\ref{subsec:STMPC} can immediately be applied. However, if the underlying system dynamics are described by a PDE, then solving the problem numerically (e.g., with a finite element method) can quickly become very expensive such that real-time applicability is not feasible. Furthermore, there are many systems where the dynamics are not known explicitly. In both situations, we can use observations to approximate the Koopman operator and derive a linear system describing the dynamics of these observations. These could consist of (part of) the system state as well as arbitrary functions of the state such as the lift coefficient of an object within a flow field.

We want to use such a Koopman operator-based reduced order model (K-ROM) for both open- and closed-loop problems. Similar to reduced-basis approaches, this introduces a splitting into an \emph{offline phase} and an \emph{online phase}. In the offline phase, we collect data and compute reduced models. In the online phase, these models are then used to accelerate the optimization problems.
Several approaches for Koopman operator-based control have recently been proposed, see e.g., \cite{PBK18,KKB17} for open-loop problems and \cite{KM16} for closed-loop problems, where the authors also use MPC. All these approaches have in common that one single Koopman operator is computed for an augmented state $(y,u)$. This requires collecting data from a large number of state-control combinations and it can become difficult to represent these rich dynamics with one single Koopman operator.

We here propose an alternative approach where we compute $n_c$ Koopman operators for the $n_c$ different autonomous systems that have been introduced in Section~\ref{subsec:STMPC}:
\begin{equation*}
	(\Kcal_{u^j} f)(y) = f(\Phi_{u^j}(y)),\quad j = 0,\ldots,n_c-1.
\end{equation*}
Using EDMD, we can compute approximations of the individual Koopman operators and then define discrete linear systems
\begin{equation}\label{eq:Discrete_Koopman_Dynamics}
	\eta_{i+1} = {K_{u^j}}^{\top} \eta_{i},\quad j = 0,\ldots,n_c-1,
\end{equation}
with initial condition $\eta_0 = \psi(z_0) = \psi(f(y^0))$, which denotes the observation of the system state at time $t_0$, expressed in terms of the dictionary $\psi$. These linear dynamics now replace the original differential equation, and
due to the linearity of the model and the restriction to low-dimensional observables instead of the full state $y$, we can significantly accelerate the computation. It should be noted that \eqref{eq:Discrete_Koopman_Dynamics} does not imply $\psi(z_{i+1}) = {K_{u^j}}^{\top} \psi(z_{i})$ for all $i$ but only $\psi(z_{i+1}) \approx {K_{u^j}}^{\top} \psi(z_{i})$ since the $K_{u^j}$ are computed from an overdetermined system.

\subsection{Switching Time Optimization}
For the switching time optimization, the problem formulation has to be adapted since we are now restricted to the time step $h$ of the flow map $\Phi$ (i.e., the sample time between two consecutive snapshots of the sampled data). Consequently, the switched dynamics~\eqref{eq:switched_dynamics} are replaced by a discrete version:
\begin{equation} \label{eq:switched_dynamics_discrete}
	\begin{aligned}
		\eta_{i+1} &= K_{u^j}^{\top} \eta_{i} \quad \text{for}~i=\hat{\tau}_{l-1} h,\ldots, \hat{\tau}_l h,\\
		\eta_0 &= \psi(f(y^0)), \\
		j &= l~\mbox{mod}~n_c,
	\end{aligned}
\end{equation}
and the switching problem~\eqref{eq:STO} is replaced by an integer version:
\begin{equation}\label{eq:STO_Koopman}
	\begin{aligned}
		\min_{\hat{\tau} \in \N^{p+2}} J &= \min_{\hat{\tau} \in \N^{p+2}} \sum_{i=0}^{k - 1} \hat{L}(\eta_i)\\
		\mbox{s.t.} \quad &\eqref{eq:switched_dynamics_discrete},
	\end{aligned}
\end{equation}
where $\hat{L}$ is the reduced objective function formulated with respect to the observations. We again have $p$ switching instants (with $\hat{\tau}_l \geq \hat{\tau}_{l-1}$, $l=1,\ldots,p+1$) and $\hat{\tau}_0 = t_0 / h$ and $\hat{\tau}_{p+2} = t_e / h$.

We now compare the two problem formulations~\eqref{eq:STO} and \eqref{eq:STO_Koopman}, i.e., the switching time problem and the corresponding approximation using the K-ROM. First, we assume that the full objective function $L$ can be evaluated using only observations:
\begin{assumption}\label{ass:equality_objectives_STO}
	$L(y(\cdot,t)) = \hat{L}(\eta_i)$ for all $\psi$ and for all $t\in\{t_0,t_0+h,t_0+2h,\ldots,t_e\}$ and $i = (t-t_0)/h$.
\end{assumption} 
\noindent Note that in a practical setting, this is automatically satisfied since the objective function can only be evaluated using observations (e.g., sensor data) such that the objective $L$ has to be defined accordingly. To circumvent projection issues from $\eta$ to $z$, a straightforward approach is to include the identity as basis functions in $\psi$.

\begin{remark}\label{rem:Optimality_STO}
	Assumption~\ref{ass:equality_objectives_STO} is not sufficient for the solutions to be close since in Problem~\eqref{eq:STO_Koopman}, we are restricted to the time grid defined by the sample time $h$ which is potentially much larger than the numerical discretization of Problem \eqref{eq:STO}. Furthermore, the objective function of Problem \eqref{eq:STO} is defined for continuous time. Therefore, identity of the objective function values can only be shown when performing two (significant) changes: 
	\begin{itemize}
		\item replace the objective function by one similar to \eqref{eq:STO_Koopman}, i.e., we only evaluate $L$ at a finite number of time steps $\{0, h, \ldots, (k-1)h\}$,
		\item restrict the optimal switching times to this time grid as well
			\begin{equation}\label{eq:tau_integer}
				\frac{\tau_l}{h} \in \N \quad \mbox{for} \quad l = 1,\ldots,p.
			\end{equation}
	\end{itemize}
	The requirement \eqref{eq:tau_integer} obviously has an impact on the solution. We will see in an example in Section~\ref{sec:Results} that if it is omitted, the solutions are not identical, but remain close. Using the two changes above, a result could be derived similar to Theorem~\ref{thm:Convergence_MPC} for the MPC case. 
\end{remark}

\subsection{Model Predictive Control}
In the closed-loop setting, the full problem and the K-ROM approximation are related more closely due to the discrete formulation of the MPC problem~\eqref{eq:MPC_STO} such that we do not have to restrict the solution to a subset of the feasible set of the original problem. The reduced version is obtained by replacing the objective function as well as the system dynamics by the K-ROM formulations:
\begin{equation}\label{eq:MPC_STO_Koopman}
	\begin{aligned} 
		\min_{u \in \hat{U}^p} &\sum_{i=s}^{s+p-1} \hat{L}(\eta_i)  \\
		\mbox{s.t.}\quad \eta_{i+1} &= K_{u_{i-s+1}}^{\top} \eta_{i}, \\
		\eta_0 &= \psi(f(y^0)).
	\end{aligned}
\end{equation}
In this situation, the two objective functions possess the same value for all $u \in U^p$ and almost every initial condition $\eta_0$.
\begin{theorem}\label{thm:Convergence_MPC}
	Consider Problem \eqref{eq:MPC_STO} and the corresponding approximation \eqref{eq:MPC_STO_Koopman} using the K-ROM and let Assumptions \ref{ass:muIndependence}, \ref{ass:KoopmanBoundedness}, and \ref{ass:equality_objectives_STO} be satisfied. Then, as the basis size and number of sampled data points tend to infinity, the objective functions $L$ and $\hat{L}$ corresponding to Problems \eqref{eq:MPC_STO} and \eqref{eq:MPC_STO_Koopman} are identical for every $u \in \hat{U}^p$ and $z$ almost everywhere.
\end{theorem}
\noindent
\textbf{Proof.}
	Provided that the Assumptions~\ref{ass:muIndependence} and \ref{ass:KoopmanBoundedness} are satisfied, Theorem~\ref{thm:Convergence_Koopman} implies convergence of EDMD in measure. That is, for all $\epsilon > 0$ we obtain
	\begin{equation*}
		\begin{aligned}
			&\lim_{k,m\rightarrow\infty} \mu \left( \left\{ z_i \in \mathcal{Z} \mid \|K_{u_i}^{\top} \psi(z_i) - \mathcal{K}_{u_i} \psi(z_i) \| \geq \epsilon \right\} \right) \\ 
			=&\lim_{k,m\rightarrow\infty} \mu \left( \left\{ z_i \in \mathcal{Z} \mid \|K_{u_i}^{\top} \psi(z_i) - \psi(f(\Phi_{u_i}(y(\cdot,t_{i})))) \| \geq \epsilon \right\} \right) \\
            =& ~~ 0.
		\end{aligned}
	\end{equation*}
	This then extends to entire trajectories and we have for $i=1,\ldots,p$:
	\begin{equation*}
		\begin{aligned}
			\lim_{k,m\rightarrow\infty} \mu \left( \left\{ z_0 \in \mathcal{Z} \mid \left\| \hat{\varphi}(\eta_0, t_i, u) - \psi(f(\varphi(y^0, t_i, u))) \right\| \geq \epsilon \right\} \right) = 0,
		\end{aligned}
	\end{equation*}
	where $\varphi: \Ycal \times \R \times \hat{U}^i \rightarrow \Ycal$ and $\hat{\varphi}: \R^k \times \R \times \hat{U}^i \rightarrow \R^k$ denote the flows of the full and the reduced dynamics, respectively.
	By Assumption~\ref{ass:equality_objectives_STO}, identity of the objective functions of Problems \eqref{eq:MPC_STO} and \eqref{eq:MPC_STO_Koopman} follows for all $u\in\hat{U}^p$ and $z$ almost everywhere. \qedOwn
\begin{remark}\label{rem:Assumptions}
	Note that Assumptions \ref{ass:muIndependence} and \ref{ass:KoopmanBoundedness} (in particular the boundedness of $\mathcal{K}$) do not hold for all systems or may be hard to verify, especially for nonlinear PDEs. Nevertheless, the convergence results for the Koopman operator justify the choice of EDMD as the numerical tool. 
	For problems where stronger convergence properties can be shown (i.e., pointwise convergence of $K_u$), stronger statements can likely be made for Problem~\eqref{eq:MPC_STO_Koopman}.
\end{remark}

The MPC procedure now follows classical approaches as discussed in Section~\ref{subsubsec:STMPC_MPC}. It is summarized in Algorithm~\ref{alg:MPC_Koopman}. In order to achieve real-time applicability, Problem~\eqref{eq:MPC_STO_Koopman} (Step 4) has to be solved within the sample time~$h$.
Since the aim of the article is to introduce the K-ROM-based switched systems concept, we simply evaluate the objective function value for all possible $u$ and select the optimal solution. However, for large prediction horizons (i.e., large $p$), this is not feasible anymore. Instead, dynamic programming techniques can be used or, since these also suffer from the \emph{curse of dimensionality}, relaxation approaches as proposed in \cite{Sag09}.
Note that the Koopman operator can also be used to predict the initial condition of the next optimization problem (Step 3), i.e., it serves as a state estimator. For state prediction, accuracy plays an important role and due to the convergence result of the Koopman operator, excellent prediction accuracy can be guaranteed.

\begin{algorithm}[h]
	\caption{(K-ROM-based MPC)}
	\label{alg:MPC_Koopman}
	\begin{algorithmic}[1]
		\Require EDMD approximations of $n_c$ Koopman operators; prediction horizon length $p \in \N$.
		\For{$i=0, 1, 2, \ldots$}.
		\State \parbox[t]{\dimexpr\linewidth-\algorithmicindent}{Observe current state: $\eta_i = \psi(z_i) = psi(f(y_i))$. \strut}
		\State \parbox[t]{\dimexpr\linewidth-\algorithmicindent}{Predict $\eta_{i+1}$ using \eqref{eq:Discrete_Koopman_Dynamics}. \strut}
		\State \parbox[t]{\dimexpr\linewidth-\algorithmicindent}{Solve Problem~\eqref{eq:MPC_STO_Koopman} with initial condition $\eta_{i+1}$ on the prediction horizon of length $p$. \strut}
		\State \parbox[t]{\dimexpr\linewidth-\algorithmicindent}{At $t = (i+1)\,h$, apply the first entry of the solution, i.e., $\hat{\tau}^*_1$, to the system. \strut}
		\EndFor
	\end{algorithmic}
\end{algorithm}

\section{Results}
\label{sec:Results}

We now illustrate the results using several examples of varying complexity. We will first revisit the ODE problem~\eqref{eq:BBPK16} for the switching time optimization. In the MPC framework, we will then consider control of the 1D Burgers equation and of the incompressible 2D Navier--Stokes equations. For the latter problems, the K-ROM predictions become too inaccurate after several steps such that open-loop control cannot be realized.

The details of the numerical setup, the data sampling and the EDMD approximations of the respective Koopman operators $\mathcal{K}_{u^j}$ are summarized in Table~\ref{tab:Sampling}. In the first case, data is sampled individually for the respective autonomous dynamics, whereas in the other two cases, long-term simulations ($60$ and $900$ seconds, respectively)  with a random switching sequence are split according to the active system for the individual snapshots. Numerical experiments show that a relatively small number of data points for each system ($< 100$) is sufficient. We use a set of basis functions comprising monomials of order up to 2 or 3, respectively. However, choosing problem-dependent basis functions might further improve the accuracy of the K-ROMs.

\begin{table}[t]
	\centering
	\caption{Numerical setup and efficiency for different examples.}
	\begin{tabular}{lccc}
		\hline
		Problem & ODE \eqref{eq:BBPK16} & 1D Burgers & 2D NSE \\ 
		\hline 
		Integrator & RK4 & Expl. Euler & PISO \cite{FP02} \\ 
		Integr.~time step & $0.005$ & $0.005$ & $0.01$ \\ 
		Time step $h$ & $0.04$ & $0.5$ & $0.25$ \\ 
		Monom.~order & 2 & 3 & 2 \\ 
		dim($\Ycal_{\mathsf{(discretized)}}$) & 2 & 48 & 22,000 \\ 
		$q$ (dim($z$)) & 2 & 4 & 8 \\ 
		$k$ (dim($K$)) & 6 & 35 & 45 \\ 
		Integr.~speed-up & $\approx 20$ & $\approx 100$ & $\approx 7.5 \cdot 10^4$ \\ 
		\hline 
	\end{tabular} 
	\label{tab:Sampling}
\end{table}

\begin{figure}[t]
	\centering
	\parbox[b]{0.49\textwidth}{\centering \includegraphics[width=0.4\textwidth]{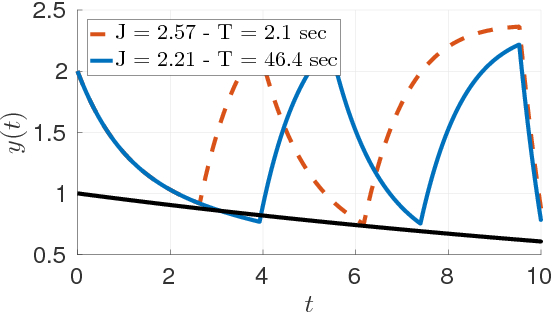} \\ \footnotesize{(a)}}
	\parbox[b]{0.49\textwidth}{\centering \includegraphics[width=0.4\textwidth]{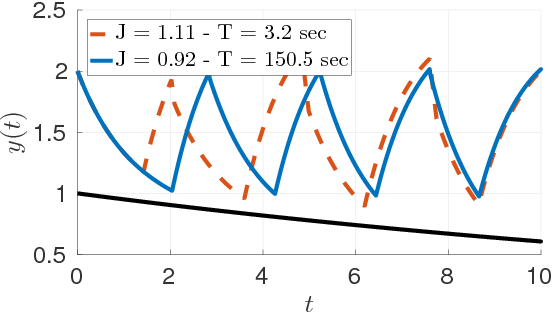} \\ \footnotesize{(b)}} \\[1ex]
	\parbox[b]{0.49\textwidth}{\centering \includegraphics[width=0.4\textwidth]{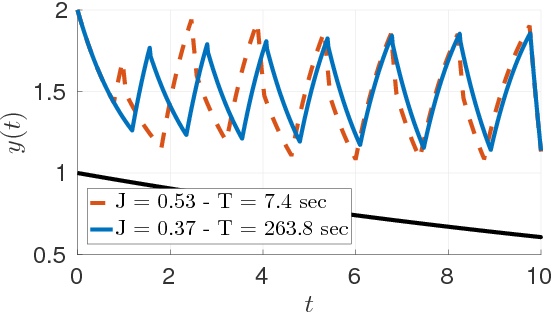} \\ \footnotesize{(c)}}
	\parbox[b]{0.49\textwidth}{\centering \includegraphics[width=0.4\textwidth]{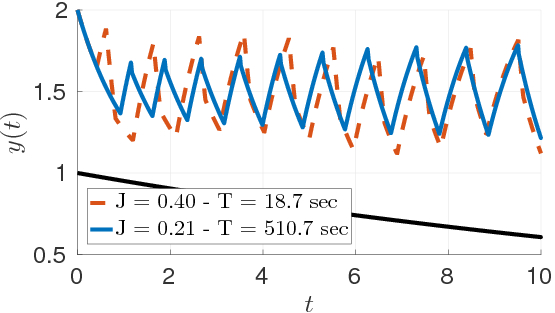} \\ \footnotesize{(d)}} \\[1ex]
	\parbox[b]{0.49\textwidth}{\centering \includegraphics[width=0.4\textwidth]{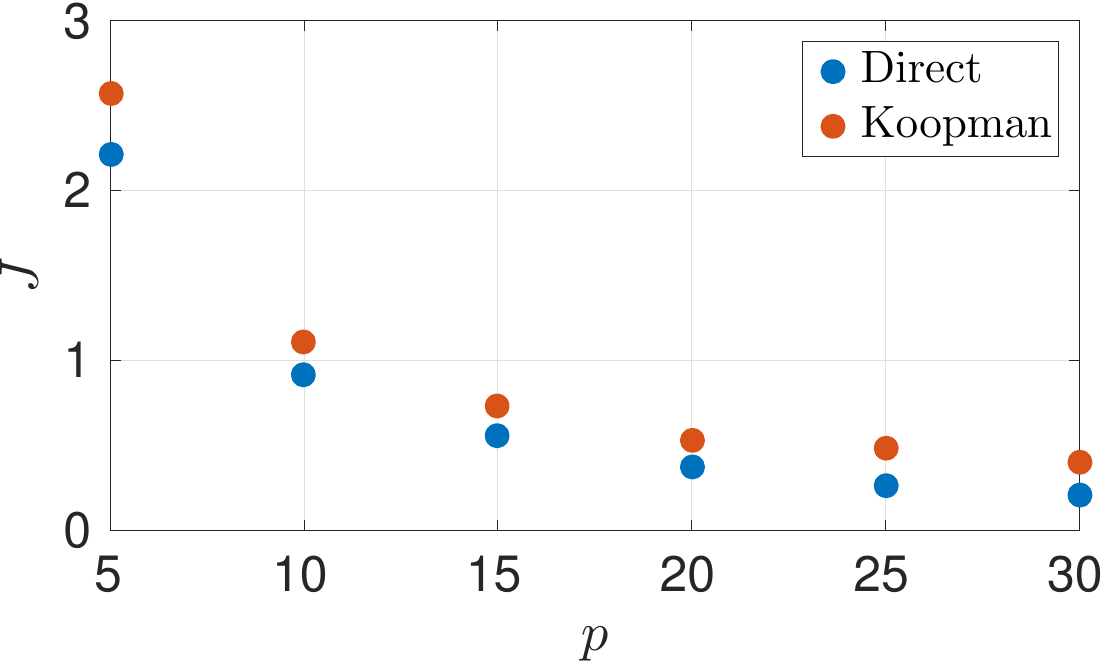} \\ \footnotesize{(e)}}
	\parbox[b]{0.49\textwidth}{\centering \includegraphics[width=0.4\textwidth]{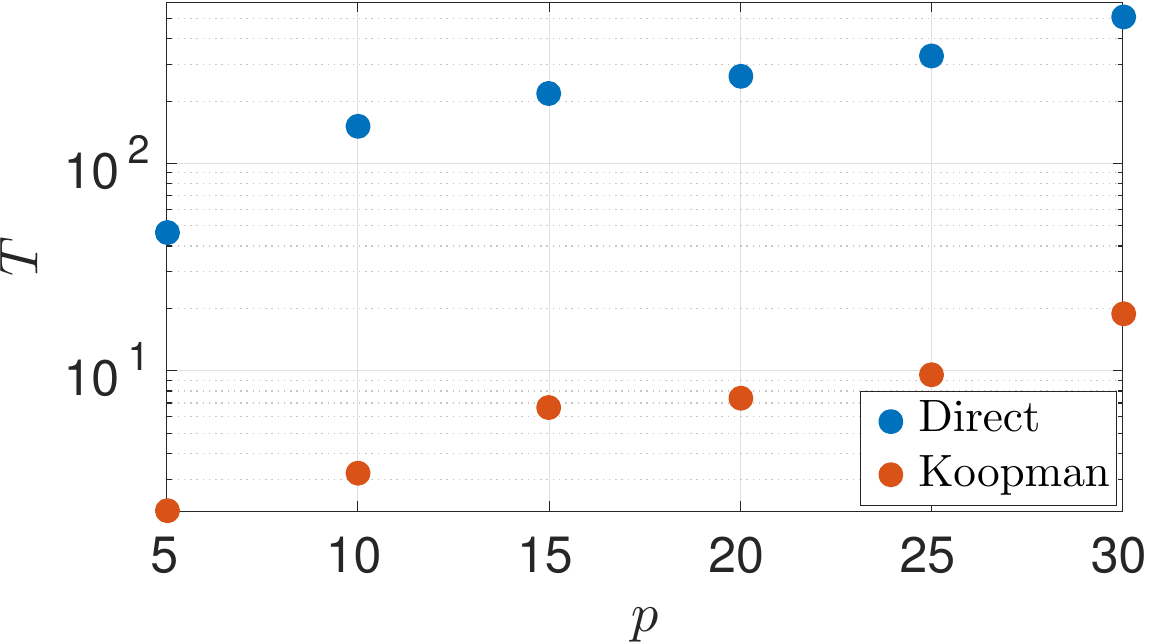} \\ \footnotesize{(f)}}
	\caption{(a) to (d) Optimal trajectories of $y_2$ of the ODE (blue solid line) and the K-ROM (orange dashed line) for different numbers of switching instants ($p=5, 10, 20, 30$). The variable $y_1$ is shown in black. (e) Comparison of the objective function values depending on the number of switching instants $p$. (f) The computing times corresponding to (e).}
	\label{fig:STO_BBPK16}
\end{figure}

Table~\ref{tab:Sampling} also shows the speed-up achieved by the K-ROM. We use MATLAB for all computations except the Navier--Stokes solution, which is computed using the open source code OpenFOAM \cite{JJT07}. The comparison is with regard to time integration over a fixed interval. We observe an acceleration by several orders, especially for systems that are expensive to evaluate. The reason is that the K-ROM is linear and its size only depends on the dimension $q$ of the observable $z$ and the size $k$ of the basis $\psi$. It is completely independent of the numerical discretization of the domain. Furthermore, the time steps of the K-ROM can be much larger than the step size for the numerical solution of the PDE.

\subsection{Switching Time Optimization}
For the switching time optimization, we revisit Problem~\eqref{eq:BBPK16} from Example~\ref{ex:ODE_PBK16} and compare the performance of the full control problem \eqref{eq:STO} with the K-ROM approximation \eqref{eq:STO_Koopman} for a tracking type objective, i.e., we want the system state to follow a prescribed trajectory $y^{\mathsf{opt}}(t)$:
\begin{equation*}
	\begin{aligned}
		J = \sum_{i=0}^{k - 1} (y_{i,2} - y_i^{\mathsf{opt}})^2.
	\end{aligned}
\end{equation*}
For the numerical solution of the switching time problems, we use a gradient-based approach as proposed in~\cite{EWD03}, but we compute the gradients using a finite difference approximation on the discretized time grid.

We omit the requirement $\tau_l / h \in \N$. Hence, we do not observe identity of the two solutions, cf.~Remark~\ref{rem:Optimality_STO}. However, when enforcing this constraint, the solutions coincide even though the objective functions are not identical, i.e., continuous versus finite sum.
The results without enforcing $\tau_l / h \in \N$ are shown in Figure~\ref{fig:STO_BBPK16}. We observe that as we increase the number of switches, the distance in $y_2$ between the full problem and the K-ROM approximation decreases. More importantly, we observe a significant speed-up (by a factor of approximately 50), cf.~Figure~\ref{fig:STO_BBPK16}~(f). This is due to the linearity of the K-ROM as well as an increased step length by a factor of $8$.

\subsection{Model Predictive Control}

\textbf{1D Burgers equation.}
As the first example with PDE constraints, we consider the 1D Burgers equation:
\begin{align*}
	\dot{y}(x,t) - \nu \Delta y(x,t) + y(x,t) \nabla y(x,t) &= u^j(x), \\
	y(x,0) &= y^0(x),
\end{align*}
with periodic boundary conditions and $\nu = 0.01$. We have directly transformed the system with a distributed control $u(x,t)$ into three autonomous systems with time-independent shape functions $u^j$, see Figure~\ref{fig:Burgers}~(a).
\begin{figure}[h]
	\centering
	\parbox[b]{0.49\textwidth}{\centering \includegraphics[width=.4\textwidth]{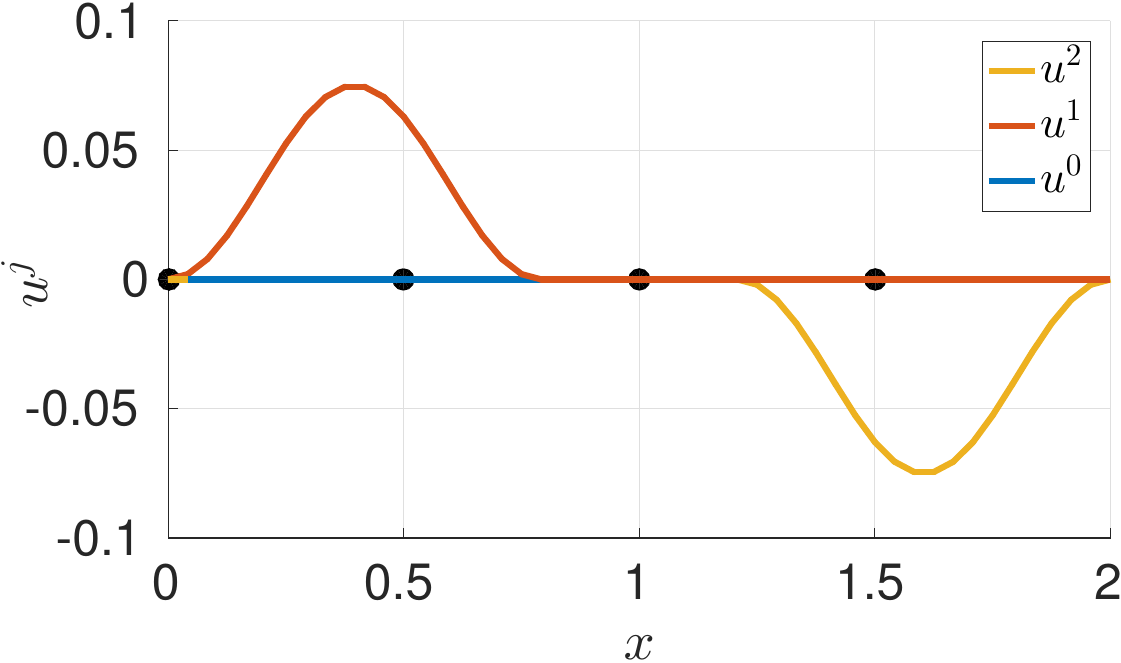} \\ \footnotesize{(a)}}
	\parbox[b]{0.49\textwidth}{\centering \includegraphics[width=.4\textwidth]{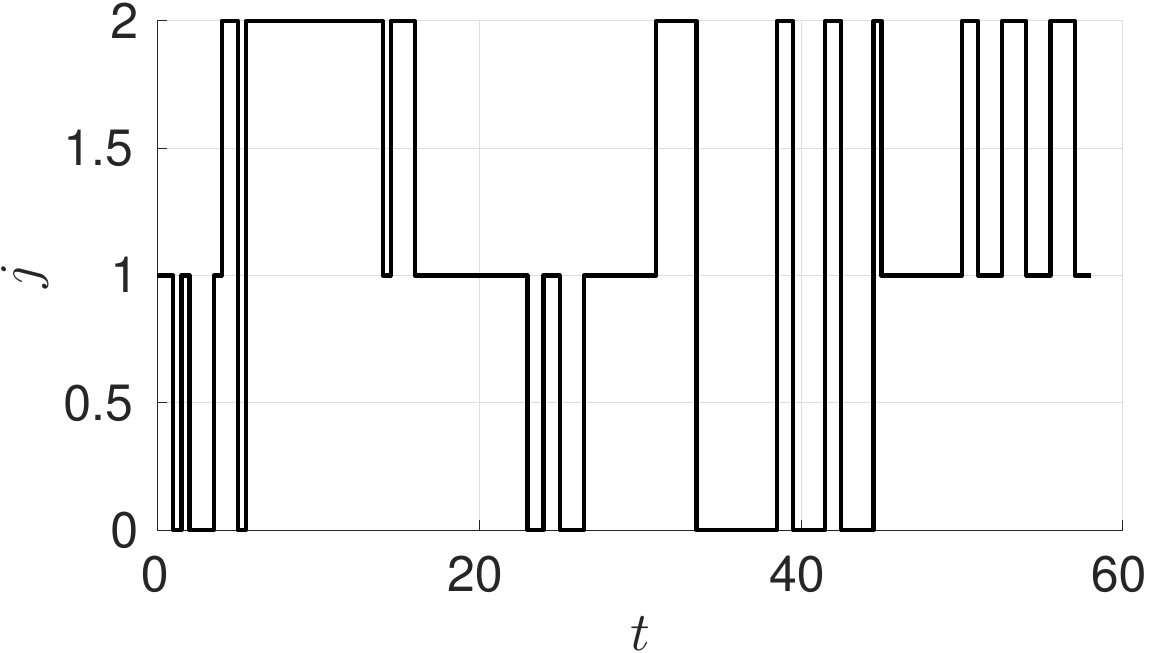} \\ \footnotesize{(b)}} \\[1ex]
	\parbox[b]{0.49\textwidth}{\centering \includegraphics[width=.4\textwidth]{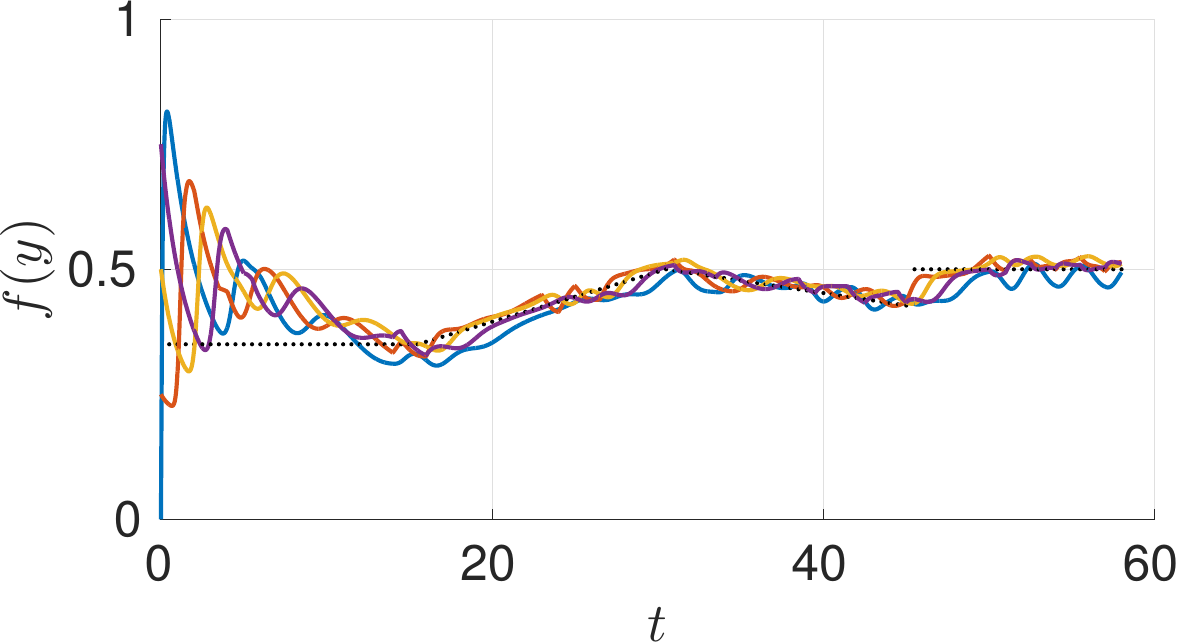} \\ \footnotesize{(c)}}
	\parbox[b]{0.49\textwidth}{\centering \includegraphics[width=.4\textwidth]{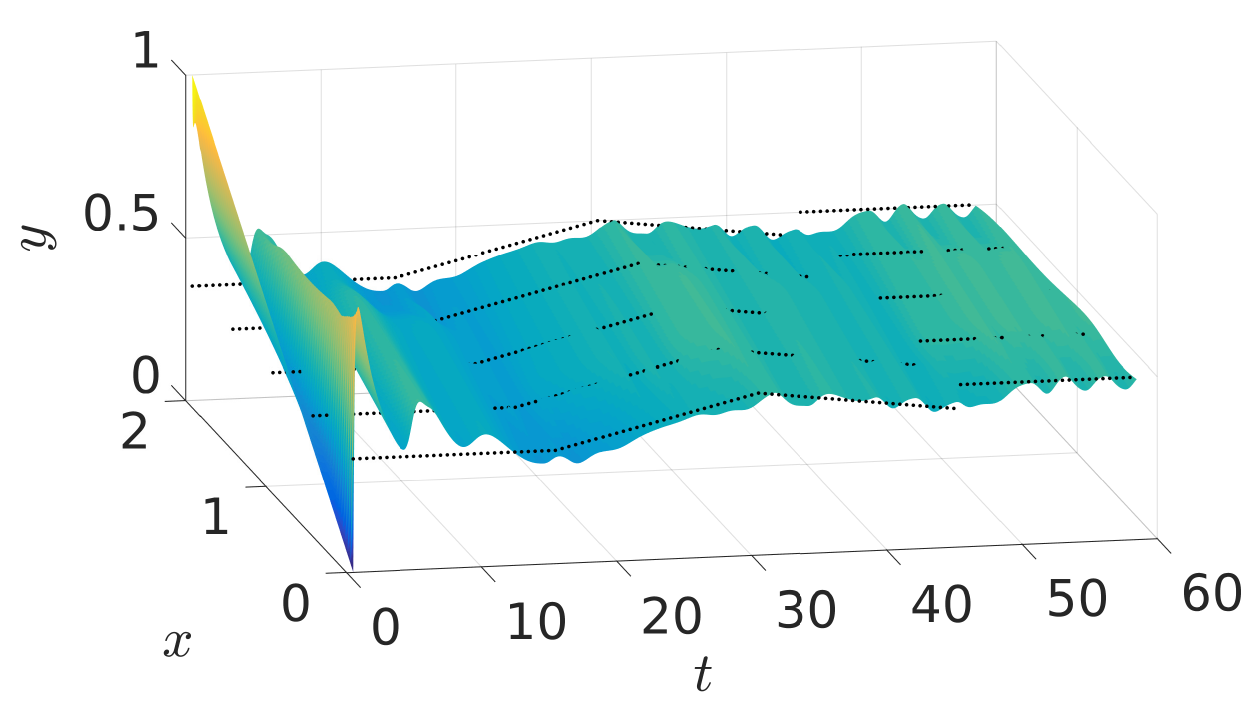} \\ \footnotesize{(d)}} \\[1ex]
	\parbox[b]{0.49\textwidth}{\centering \includegraphics[width=.4\textwidth]{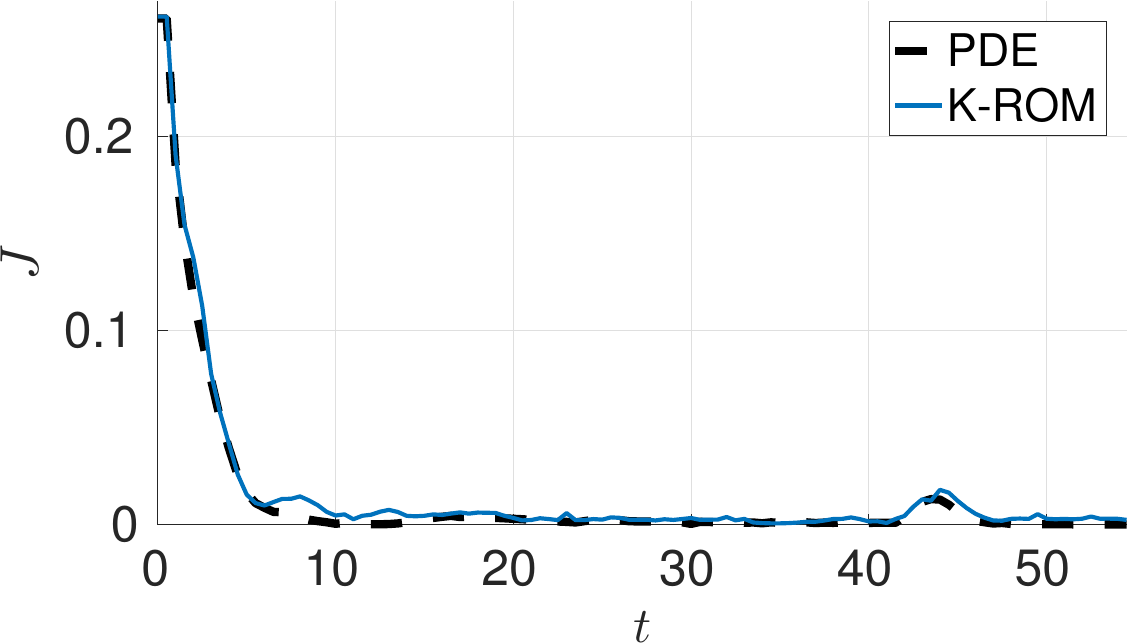} \\ \footnotesize{(e)}}
	\parbox[b]{0.49\textwidth}{\centering \includegraphics[width=.4\textwidth]{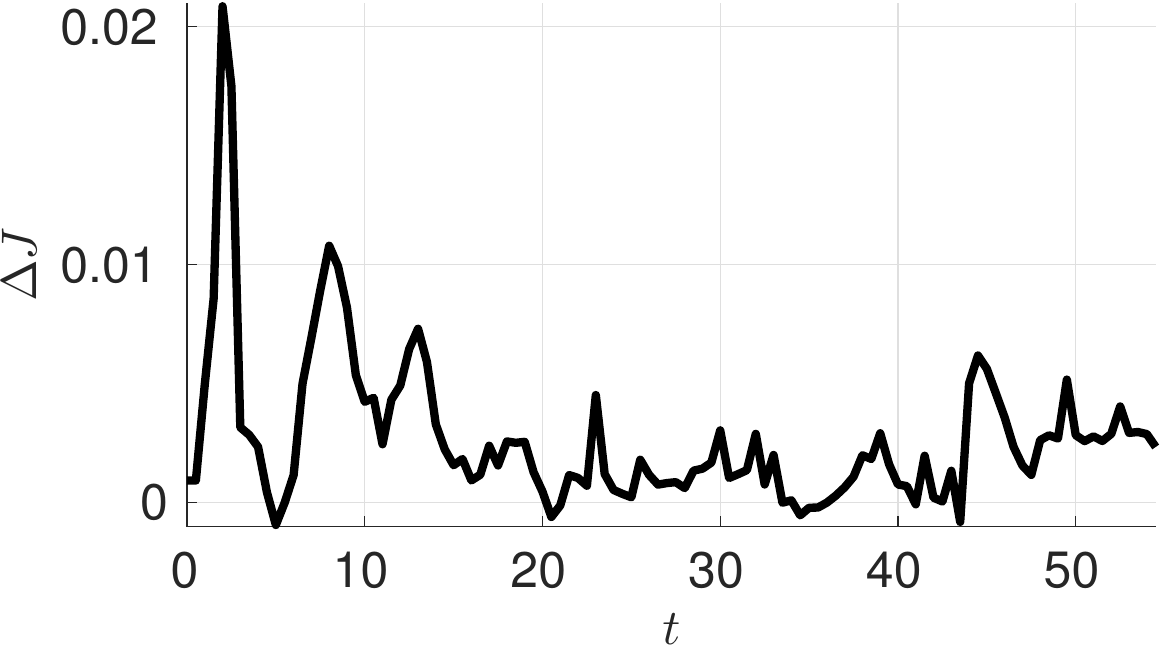} \\ \footnotesize{(f)}}
	\caption{(a) The shape functions $u^0$ to $u^2$. The positions where the state is observed by $f$ are shown in black. (b) Optimal right-hand side $j$ obtained by the K-ROM MPC approach. (c) Corresponding trajectories of the observations $z$. The reference trajectory (black dotted line) is identical for all four observables. (d) Full system state. (e) Value of the objective function during the MPC for the PDE-constrained and the K-ROM based problem (MPC setup is identical ($p=3$)). (f) Difference between the objective function values in (e).}
	\label{fig:Burgers}
\end{figure}

In contrast to the ODE case, we do not observe the entire state here but only certain points in space (the black dots in Figure~\ref{fig:Burgers}~(a)), i.e.,
\begin{equation*}
	z = f(y(\cdot,t)) = \left(y(0,t), y(0.5,t), y(1,t), y(1.5,t)\right)^{\top},
\end{equation*}
and we construct the K-ROM for these observations from data. Following Assumption~\ref{ass:equality_objectives_STO}, we formulate the tracking type objective function in terms of the observables only, which yields the following MPC problem:
\begin{equation*}
	\min_{u \in \{u^0,u^1,u^2\}^p} \sum_{i=s}^{s+p-1} \|z_i - z_{i}^{\mathsf{opt}} \|_2^2,
\end{equation*}
where $z_{i}^{\mathsf{opt}} = f(y^{\mathsf{opt}}(\cdot, t_i))$. Alternatively, we could directly observe the distance between the current and the reference state, i.e., $\int \|y(x,t) - y^{\mathsf{opt}}(x,t)\| \, dx$. This approach will be considered in the next example.

The results of the K-ROM-based MPC are shown in Figure~\ref{fig:Burgers}. We see that by using a low-dimensional linear model for the observations, we are able to control the PDE very accurately. This is even more remarkable since we have severely restricted the control space to only three inputs. Furthermore, the quality of the solution is almost equal to the full problem, cf.~Figure~\ref{fig:Burgers} (e) and (f).

\textbf{2D Navier--Stokes equations.} As a final example, we consider the flow of a fluid around a cylinder described by the 2D incompressible Navier--Stokes equations at a Reynolds number of $Re = 100$ (see Figure~\ref{fig:vonKarman}~(a) for the problem setup):
\begin{align*}
	\dot{y}(x,t) + y(x,t) \cdot \nabla y(x,t) &= \nabla p(x,t) + \frac{1}{Re} \Delta y(x,t), \\
	\nabla \cdot y(x,t) &= 0, \\
	y(x,0) &= y^0(x).
\end{align*}
The system is controlled via rotation of the cylinder, i.e., $u(t)$ is the angular velocity. The uncontrolled system possesses a periodic solution, the well-known \emph{von K\'{a}rm\'{a}n vortex street}.
\begin{figure}[t]
	\centering
	\parbox[b]{0.49\textwidth}{\centering \includegraphics[width=.4\textwidth]{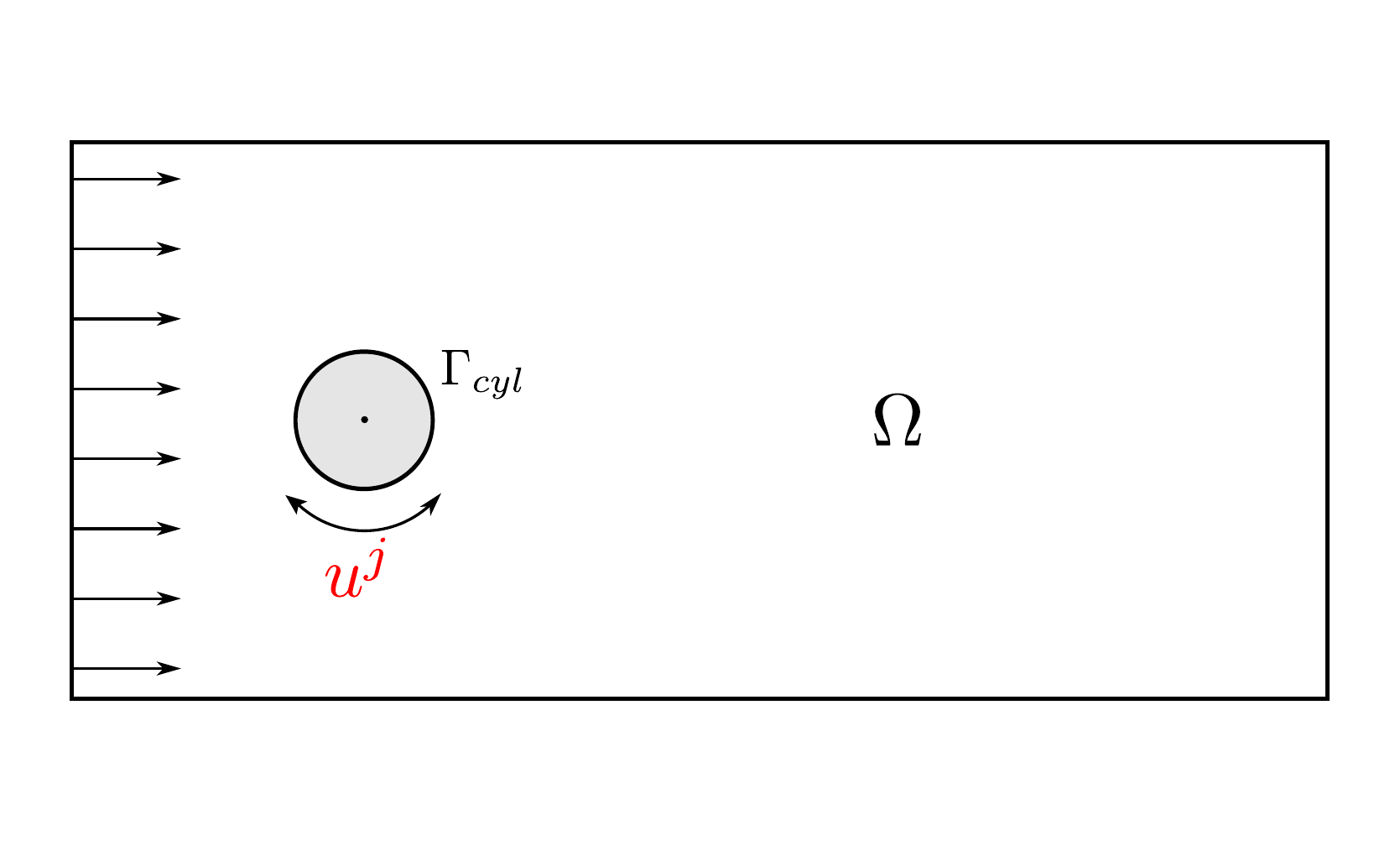} \\ \footnotesize{(a)}}
	\parbox[b]{0.49\textwidth}{\centering \includegraphics[width=.4\textwidth]{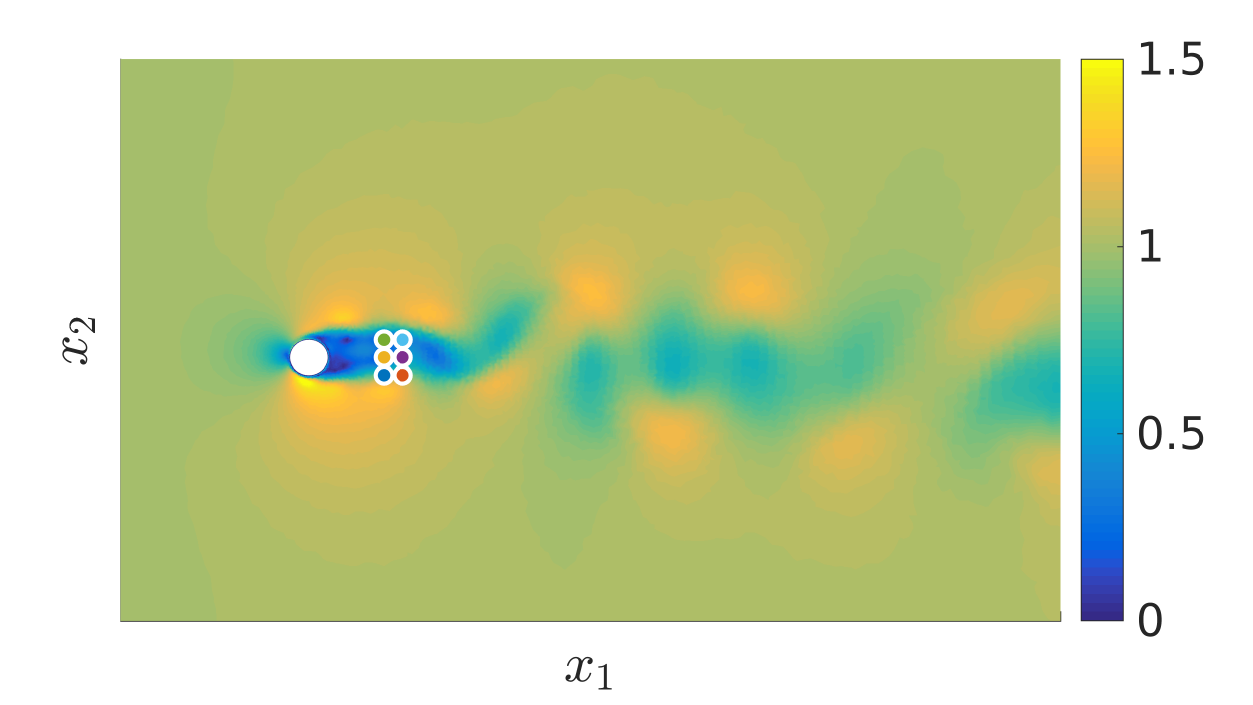} \\ \footnotesize{(b)}} \\[1ex]
	\parbox[b]{0.49\textwidth}{\centering \includegraphics[width=.4\textwidth]{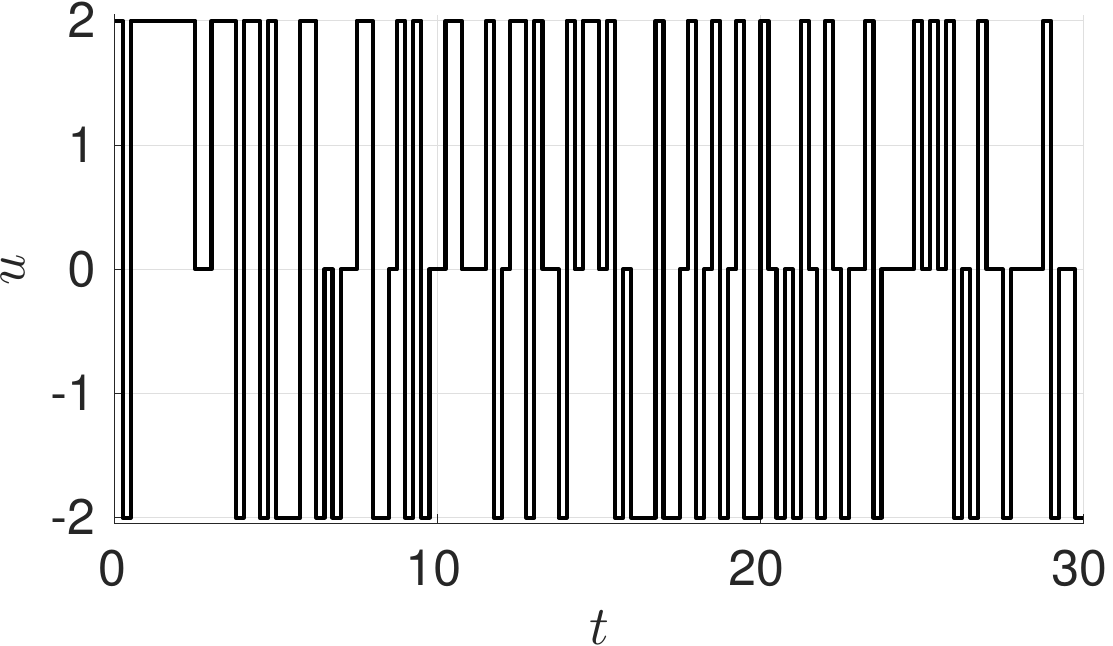} \\ \footnotesize{(c)}}
	\parbox[b]{0.49\textwidth}{\centering \includegraphics[width=.4\textwidth]{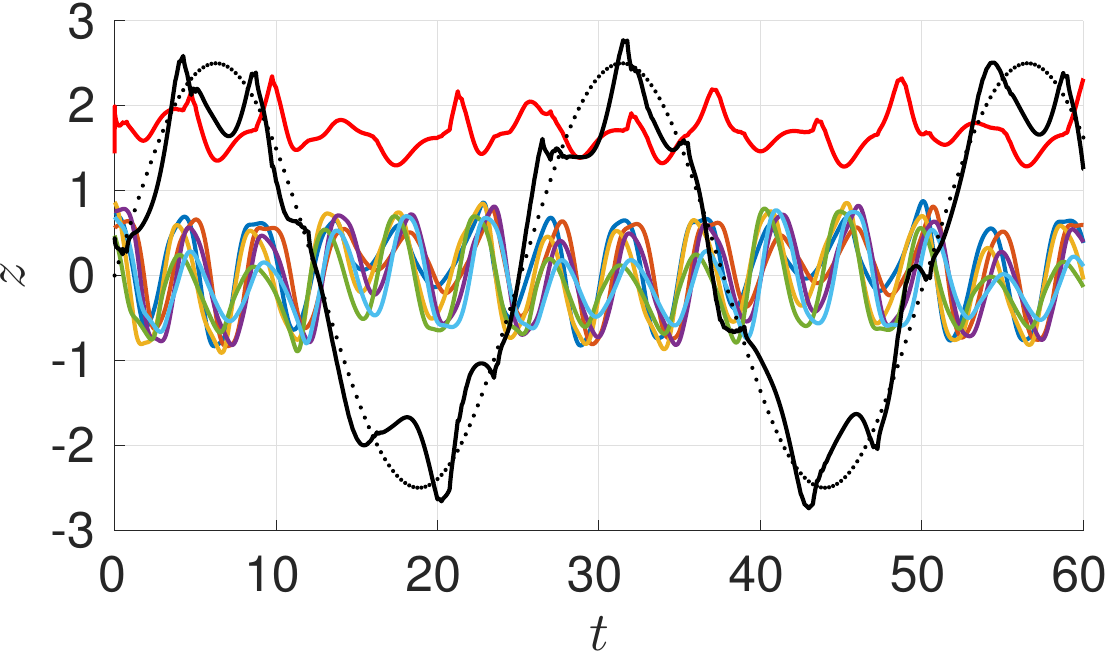} \\ \footnotesize{(d)}}
	\caption{(a) Problem setup. The system is controlled via rotation of the cylinder $\Gamma_{cyl}$. (b) Snapshot of the full system simulation for the sequence shown in (c), the sensor positions for the observations are marked by the colored dots. (c) Optimal control inputs obtained by the K-ROM MPC approach with $p=5$. (d) Corresponding observations $f((y(\cdot,t))$. The reference trajectory for the lift coefficient (black line) is marked by black dots, the drag is shown in red and the six horizontal velocities are colored according to the sensors shown in (b).}
	\label{fig:vonKarman}
\end{figure}

We now follow the same procedure as in the previous example. Instead of observing the full state, we observe the lift $C_l$ and the drag $C_d$ of the cylinder. Additionally, we observe the vertical velocity at six different positions $(x_1,\ldots,x_6)$ in the cylinder wake (see Figure~\ref{fig:vonKarman}~(b)):
\begin{equation*}
	z = \left(C_l(t), C_d(t), y_2(x_1,t), \ldots, y_2(x_6,t)\right)^{\top}.
\end{equation*}
The goal is to control the lift by rotating the cylinder. We transform the non-autonomous system into three autonomous ones with constant cylinder rotations $u^0 = 0$, $u^1 = 2$, $u^2 = -2$ and approximate the three corresponding Koopman operators. Since the lift coefficient is one of the observables, we simply have to track the corresponding entry of $z$ in the MPC problem:
\begin{align*}
	\min_{u \in \{u^0,u^1,u^2\}^p} \sum_{i=s}^{s+p-1} \left(z_{i,1} - z_{i}^{\mathsf{opt}}\right)^2.
\end{align*}
The switching sequence obtained by the K-ROM-based MPC algorithm is shown in Figure~\ref{fig:vonKarman}~(c), the corresponding dynamics of the K-ROM in Figure~\ref{fig:vonKarman}~(d). We see that the algorithm can successfully track the desired lift trajectory (shown as black dots) using only three different control inputs and a linear reduced model. We also observe some divergence, in particular for large reference values. This is due to the fact that the system cannot follow the prescribed desired state using the box constraints $[u^l,u^u] = [-2,2]$. An additional explanation for deviations may be that the data was not rich enough to achieve good performance for all three Koopman operator approximations, and predictions are not sufficiently accurate which results in the selection of the incorrect control input. This will be the subject of further research. In the first case, one could adapt the control bounds as well as add a few more controls. However, this will lead to an exponential increase in the number of possible solutions for the MPC problem such that special care has to be taken to maintain real-time applicability. In the second case, one could adopt ideas from \cite{HWR14}: during system operation, additional data is collected and then used to regularly update the Koopman operator approximations.

\section{Conclusion}
\label{sec:Conclusion}

We have presented a framework for open- and closed-loop control using Koopman operator-based reduced order models. By transforming the non-autonomous control system into a (small) number of autonomous systems with fixed control inputs, the control problem is transformed into a switching time problem. The approach enables us to control infinite-dimensional nonlinear systems using finite-dimensional, linear surrogate models. Using a recent convergence result for EDMD, we can prove optimality of the obtained solution. The numerical results show excellent performance, both considering the accuracy as well as the computing time. It will therefore be of great interest to further explore the limits and possibilities of this approach.

Further directions of research are stability properties of the K-ROM-based MPC method and a comparison in performance (e.g., the choice of $p$). Due to the identity of the optimal solutions, we expect that stability results can be adopted (for the infinite data limit). As already mentioned, it will be interesting to abandon the separation into offline and online phase and investigate the influence of regular updates using streaming data \cite{HWR14}. In situations where more multiple control inputs are required, it might become challenging to maintain real-time applicability. In this situation, relaxation techniques need to be exploited. It would also be of interest to further study the influence of the assumptions on the Koopman operator and whether convergence can be improved by choosing basis functions tailored to the system dynamics.

\noindent
\textbf{Acknowledgements:} We would like to thank the anonymous reviewers for their very helpful comments regarding the convergence analysis. This research was partially funded by the DFG Priority Programme 1962 ``Non-smooth and Complementarity-based Distributed Parameter Systems''. 

\bibliographystyle{alpha}
\bibliography{Bibliography}

\newcommand{\etalchar}[1]{$^{#1}$}
\begin{thebibliography}{RMB{\etalchar{+}}09}

\bibitem[BGW15]{BGW15}
P.~Benner, S.~Gugercin, and K.~Willcox.
\newblock {A Survey of Projection-Based Model Reduction Methods for Parametric
  Dynamical Systems}.
\newblock {\em SIAM Review}, 57(4):483--531, 2015.

\bibitem[BMM12]{BMM12}
M.~Budi{\v{s}}i{\'{c}}, R.~Mohr, and I.~Mezi{\'{c}}.
\newblock {Applied Koopmanism}.
\newblock {\em Chaos}, 22, 2012.

\bibitem[CAF09]{CEMF09}
L.~Cordier, B.~{Abou El Majd}, and J.~Favier.
\newblock {Calibration of POD reduced-order models using Tikhonov
  regularization}.
\newblock {\em International Journal for Numerical Methods in Fluids},
  63(2):269--296, 2009.

\bibitem[CDH16]{CDH16}
M.~Claeys, J.~Daafouz, and D.~Henrion.
\newblock {Modal occupation measures and LMI relaxations for nonlinear switched
  systems control}.
\newblock {\em Automatica}, 64:143--154, 2016.

\bibitem[EWD03]{EWD03}
M.~Egerstedt, Y.~Wardi, and F.~Delmotte.
\newblock {Optimal Control of Switching Times in Switched Dynamical Systems}.
\newblock In {\em 42nd IEEE International Conference on Decision and Control
  (CDC)}, pages 2138--2143, 2003.

\bibitem[Fah00]{Fah00}
M.~Fahl.
\newblock {\em {Trust-region Methods for Flow Control based on Reduced Order
  Modelling}}.
\newblock Phd thesis, University of Trier, 2000.

\bibitem[FP02]{FP02}
J.~H. Ferziger and M.~Peric.
\newblock {\em {Computational Methods for Fluid Dynamics}}.
\newblock Springer Berlin Heidelberg, 3 edition, 2002.

\bibitem[GL99]{GL99}
V.~Gaitsgory and A.~Leizarowitz.
\newblock Limit occupational measures set for a control system and averaging of
  singularly perturbed control systems.
\newblock {\em Journal of Mathematical Analysis and Applications},
  233(2):461--475, 1999.

\bibitem[GP17]{GP17}
L.~Gr{\"{u}}ne and J.~Pannek.
\newblock {\em {Nonlinear Model Predictive Control}}.
\newblock Springer International Publishing, 2nd edition, 2017.

\bibitem[HV05]{HV05}
M.~Hinze and S.~Volkwein.
\newblock {Proper Orthogonal Decomposition Surrogate Models for Nonlinear
  Dynamical Systems: Error Estimates and Suboptimal Control}.
\newblock In P.~Benner, D.~C. Sorensen, and V.~Mehrmann, editors, {\em
  Reduction of Large-Scale Systems}, volume~45, pages 261--306. Springer Berlin
  Heidelberg, 2005.

\bibitem[HWR14]{HWR14}
M.~S. Hemati, M.~O. Williams, and C.~W. Rowley.
\newblock {Dynamic Mode Decomposition for Large and Streaming Datasets}.
\newblock {\em Physics of Fluids}, 26(111701):1--6, 2014.

\bibitem[IK02]{IK02}
K.~Ito and K.~Kunisch.
\newblock Receding horizon optimal control for infinite dimensional systems.
\newblock {\em {ESAIM: Control, Optimisation and Calculus of Variations}},
  8:741–760, 2002.

\bibitem[JJT07]{JJT07}
H.~Jasak, A.~Jemcov, and Z.~Tukovic.
\newblock {OpenFOAM : A C++ Library for Complex Physics Simulations}.
\newblock {\em International Workshop on Coupled Methods in Numerical
  Dynamics}, pages 1--20, 2007.

\bibitem[KGPS18]{KGPS18}
S.~Klus, P.~Gel{\ss}, S.~Peitz, and C.~Sch{\"{u}}tte.
\newblock {Tensor-based dynamic mode decomposition}.
\newblock {\em Nonlinearity}, 31(7):3359--3380, 2018.

\bibitem[KKB17]{KKB17}
E.~Kaiser, J.~N. Kutz, and S.~L. Brunton.
\newblock {Data-driven discovery of Koopman eigenfunctions for control}.
\newblock {\em arXiv:1707.0114}, 2017.

\bibitem[KKS16]{KKS16}
S.~Klus, P.~Koltai, and C.~Sch{\"{u}}tte.
\newblock {On the numerical approximation of the Perron-Frobenius and Koopman
  operator}.
\newblock {\em Journal of Computational Dynamics}, 3(1):51--79, 2016.

\bibitem[KM18a]{KM16}
M.~Korda and I.~Mezi{\'{c}}.
\newblock {Linear predictors for nonlinear dynamical systems: Koopman operator
  meets model predictive control}.
\newblock {\em Automatica}, 93:149--160, 2018.

\bibitem[KM18b]{KM17}
M.~Korda and I.~Mezi{\'{c}}.
\newblock {On Convergence of Extended Dynamic Mode Decomposition to the Koopman
  Operator}.
\newblock {\em Journal of Nonlinear Science}, 28(2):687--710, 2018.

\bibitem[KNK{\etalchar{+}}18]{KNKWKSN18}
S.~Klus, F.~N{\"u}ske, P.~Koltai, H.~Wu, I.~Kevrekidis, C.~Sch{\"u}tte, and
  F.~No{\'e}.
\newblock Data-driven model reduction and transfer operator approximation.
\newblock {\em Journal of Nonlinear Science}, 28(3):985--1010, 2018.

\bibitem[Koo31]{Koo31}
B.~O. Koopman.
\newblock {Hamiltonian Systems and Transformations in Hilbert Space}.
\newblock {\em Proceedings of the National Academy of Sciences},
  17(5):315--318, 1931.

\bibitem[LHPT08]{LHPT08}
J.~B. Lasserre, D.~Henrion, C.~Prieur, and E.~Tr{\'{e}}lat.
\newblock {Nonlinear Optimal Control via Occupation Measures and
  LMI-Relaxations}.
\newblock {\em SIAM Journal on Control and Optimization}, 47(4):1643--1666,
  2008.

\bibitem[Lib03]{Lib03}
D.~Liberzon.
\newblock {\em Switching in systems and control}.
\newblock Springer Science \& Business Media, 2003.

\bibitem[LM94]{LaMa94}
A.~Lasota and M.~C. Mackey.
\newblock {\em Chaos, fractals, and noise: Stochastic aspects of dynamics},
  volume~97 of {\em Applied Mathematical Sciences}.
\newblock Springer, 2nd edition, 1994.

\bibitem[Mez13]{Mez13}
I.~Mezi{\'{c}}.
\newblock {Analysis of Fluid Flows via Spectral Properties of the Koopman
  Operator}.
\newblock {\em Annual Review of Fluid Mechanics}, 45:357--378, 2013.

\bibitem[NPM05]{NPM05}
B.~R. Noack, P.~Papas, and P.~A. Monkewitz.
\newblock {The need for a pressure-term representation in empirical Galerkin
  models of incompressible shear flows}.
\newblock {\em Journal of Fluid Mechanics}, 523:339--365, 2005.

\bibitem[PBK18]{PBK18}
J.~L. Proctor, S.~L. Brunton, and J.~N. Kutz.
\newblock {Generalizing Koopman Theory to allow for inputs and control}.
\newblock {\em SIAM Journal on Applied Dynamical Systems}, 17(1):909--930,
  2018.

\bibitem[PHA15]{PHA15}
L.~Pyta, M.~Herty, and D.~Abel.
\newblock {Optimal Feedback Control of the Incompressible
  Navier-Stokes-Equations using Reduced Order Models}.
\newblock In {\em IEEE 54th Annual Conference on Decision and Control (CDC)},
  pages 2519--2524, 2015.

\bibitem[RMB{\etalchar{+}}09]{RMB+09}
C.~W. Rowley, I.~Mezi{\'{c}}, S.~Bagheri, P.~Schlatter, and D.~S. Henningson.
\newblock {Spectral analysis of nonlinear flows}.
\newblock {\em Journal of Fluid Mechanics}, 641:115--127, 2009.

\bibitem[Row05]{Row05}
C.~W. Rowley.
\newblock {Model Reduction for Fluids, Using Balanced Proper Orthogonal
  Decomposition}.
\newblock {\em International Journal of Bifurcation and Chaos},
  15(3):997--1013, 2005.

\bibitem[Sag09]{Sag09}
S.~Sager.
\newblock {Reformulations and algorithms for the optimization of switching
  decisions in nonlinear optimal control}.
\newblock {\em Journal of Process Control}, 19(8):1238--1247, 2009.

\bibitem[SBD12]{SBD12}
S.~Sager, H.~G. Bock, and M.~Diehl.
\newblock {The integer approximation error in mixed-integer optimal control}.
\newblock {\em Mathematical Programming}, 133(1-2):1--23, 2012.

\bibitem[Sch10]{Sch10}
P.~J. Schmid.
\newblock Dynamic mode decomposition of numerical and experimental data.
\newblock {\em Journal of Fluid Mechanics}, 656:5--28, 2010.

\bibitem[Sir87]{Sir87}
L.~Sirovich.
\newblock {Turbulence and the dynamics of coherent structures part I: coherent
  structures}.
\newblock {\em Quarterly of Applied Mathematics}, XLV(3):561--571, 1987.

\bibitem[SOBG17]{SOBG17}
B.~Stellato, S.~Ober-Bl{\"{o}}baum, and P.~J. Goulart.
\newblock {Second-Order Switching Time Optimization for Switched Dynamical
  Systems}.
\newblock {\em IEEE Transactions on Automatic Control}, PP(99):1--8, 2017.

\bibitem[TRL{\etalchar{+}}14]{TRL+14}
J.~H. Tu, C.~W. Rowley, D.~M. Luchtenburg, S.~L. Brunton, and J.~N. Kutz.
\newblock {On Dynamic Mode Decomposition: Theory and Applications}.
\newblock {\em Journal of Computational Dynamics}, 1(2):391--421, 2014.

\bibitem[TV09]{TV09}
F.~Tr{\"{o}}ltzsch and S.~Volkwein.
\newblock {POD a-posteriori error estimates for linear-quadratic optimal
  control problems}.
\newblock {\em Computational Optimization and Applications}, 44(1):83--115,
  2009.

\bibitem[VMS10]{VMS10}
U.~Vaidya, P.~G. Mehta, and U.~V. Shanbhag.
\newblock {Nonlinear Stabilization via Control Lyapunov Measure}.
\newblock {\em IEEE Transactions on Automatic Control}, 55(6):1314--1328, 2010.

\bibitem[WKR15]{WKR15}
M.~O. Williams, I.~G. Kevrekidis, and C.~W. Rowley.
\newblock {A Data-Driven Approximation of the Koopman Operator: Extending
  Dynamic Mode Decomposition}.
\newblock {\em Journal of Nonlinear Science}, 25(6):1307--1346, 2015.

\bibitem[ZA15]{ZA15}
F.~Zhu and P.~J. Antsaklis.
\newblock {Optimal control of hybrid switched systems: A brief survey}.
\newblock {\em Discrete Event Dynamic Systems}, 25(3):345--364, 2015.

\end{thebibliography}
\end{document}